\theoremstyle{plain} 
\newtheorem{theorem}[subsection]{Theorem}
\newtheorem{proposition}[subsection]{Proposition}
\newtheorem{lemma}[subsection]{Lemma}
\theoremstyle{definition}
\theoremstyle{remark}
\newtheorem*{remark*}{Remark}
\numberwithin{equation}{subsection}
\begin{document}

\title[Order Types of Reflection Orders on affine Weyl groups]{Order Types of Reflection Orders on affine Weyl groups}

\author{Weijia Wang}
\address{School of Statistics and Mathematics \\Shanghai Lixin University of Accounting And Finance\\ No. 995 Shangchuan Road, Shanghai, 201209\\ China }
\email{wwang8@alumni.nd.edu}
\author{Rui Wang}
\address{School of Statistics and Mathematics \\Shanghai Lixin University of Accounting And Finance\\ No. 995 Shangchuan Road, Shanghai, 201209\\ China }
\email{18709483691@163.com}

\begin{abstract}
In this paper, we investigate the order types of reflection orders on irreducible affine Weyl groups. We show that they are intimately related to the Catalan combinatorics.
We explicitly describe all of those order types and show that these order types can be parameterized by deformed Dyck words, which can be enumerated by Catalan numbers.
\end{abstract}
\maketitle

\section{Introduction}

Reflection orders, introduced by Dyer \cite{dyerhecke}, are specific total orderings defined on the set of positive roots (or equivalently, the set of reflections) of a Coxeter group. Since then, reflection orders have found interesting applications in Bruhat orders, Iwahori-Hecke algebra, Kazhdan-Lusztig polynomials and R-polynomials. For example, one can employ reflection orders to derive (nonrecursive) combinatorial formulae for Kazhdan-Lusztig polynomials and R-polynomials. Despite these wide applications, reflection orders are not well understood and many  questions  concerning their structure remain unanswered. For example, there are two  long standing conjectures of Dyer concerning reflection orders. The first conjecture asserts that a maximal chain of biclosed sets in the set of positive roots consists precisely of the initial intervals of a reflection order. The second conjecture asserts that the initial intervals of all reflection orders on a Coxeter group when ordered under inclusion form a lattice.

A fundamental combinatorial question in the study of a total order is the determination of its order type.
In this study, we initiate an investigation of the possible order types of reflection orders on infinite Coxeter groups. In particular we present explicit description of the order types of irreducible affine Weyl groups.  Cellini and Papi \cite{cellini} proved that any reflection order on affine Weyl groups can be expressed as a shuffle of a
finite sum of totally ordered sets of type $\omega$ and a finite sum of totally ordered sets of
type $\omega^*$. We present a finer description and the primary result of the study is that, for affine Weyl groups, the order types of reflection orders can be parameterized by ``deformed" Dyck words.

Here, an \emph{extended Dyck word} of length $2n$ is defined to be a string consisting of $n$ occurrences of 1 and $n$ occurrences of 0, such that every proper initial segment of it has more 0s than 1s. Take an extended Dyck word of length $2n$ and substitute some of its contiguous substrings of the form $\underbrace{\epsilon\epsilon\cdots\epsilon}_{k\, \mathrm{times}}$ ($\epsilon\in \{0,1\}, k\geq 2$) with $\underbrace{\epsilon\epsilon\cdots\epsilon}_{k'\, \mathrm{times}}, 0<k'<k$.  We call the resulting word a $2n$-\emph{trimmed Dyck word}. Given a $2n-$trimmed Dyck word $\epsilon_1\epsilon_2\cdots\epsilon_k,$ one can define an associated order type:
$o(\epsilon_1)+o(\epsilon_2)+\cdots +o(\epsilon_k)$
where $o(0)=\omega$, the order type of the natural order on the set of natural numbers and $o(1)=\omega^*$, the order type of the backward order of the natural order on the set of natural numbers. Our results assert that after removing finitely many roots, each order type of the reflection order on an affine Weyl group of rank $n$ is associated to a $2n$-trimmed Dyck word and conversely for every $2n$-trimmed Dyck word there exists a reflection order whose order type is associated to it. See Theorem \ref{mainthm} for the precise statement and Example \ref{keyexample} for a concrete example. The analysis in this study will be applied to the investigation of braid operation on reflection orders in the subsequent works. Furthermore, we show that those $2n-$trimmed Dyck words are almost Catalan objects, in the sense that the number of $2n-$trimmed Dyck words equals to the $(n+1)$-th Catalan number minus one.

This paper is organized as follow: In Section \ref{bkg} we review the basic properties of linear ordering and present background materials from Coxeter theory. Section \ref{condensation} is dedicated to the description of a specific method for partitioning a reflection order on an affine Weyl group into a finite set of intervals. This partition (which is called a condensation) plays a pivotal role in our analysis.
In Section \ref{mainthm}, we demonstrate that the condensation described in Section \ref{condensation} yields a chain in the poset of biclosed sets in the associated finite root system. The signature of such a chain is related to the deformed Dyck words, which allows us to prove the main theorem on enumeration.
In the appendix, we prove that $2n-$trimmed Dyck words can be counted by using Catalan numbers.

\section{Background}\label{bkg}

\subsection{Total orders}

In this subsection we review necessary background materials from the theory of linear (total) ordering. A good reference is \cite{linearorder} and we usually follow the terminology and notation used there.
Two totally ordered sets $(A,\prec_A), (B,\prec_B)$ are said to be isomorphic if there exists a bijection $f:A\rightarrow B$ such that
$$f(a_1)\prec_B f(a_2)\,\,\,\text{if}\,\text{and}\,\text{only}\,\text{if}\,\,\, a_1\prec_A a_2.$$
This notion of isomorphism  defines an equivalence relation on the class of all totally ordered sets.
Two equivalent totally ordered sets are said to be of the same order type.
A totally ordered set  is said to have the \emph{order type} $\theta$ if $\theta$ is the representative of the equivalence class of this totally ordered set.
Denote the set of natural number by $\mathbb{N}$.
The order type of $\mathbb{N}$ (with the natural order) is denoted by $\omega$. All  totally ordered sets with $n$ elements are isomorphic. Their order type is denoted by $[n]$.
The backward total order of a totally ordered set $(A,\prec_A)$ (denoted by $(A,\prec_A^*)$) is defined by $x\prec_A^* y$ if and only if $y\prec_A x$.
If the order type of $(A,\prec_A)$ is $\theta$, the order type of $(A,\prec_A^*)$ will be denoted by $\theta^*.$

Let $(A,\prec_A)$ and $(B,\prec_B)$ be two disjoint totally ordered set. Their sum (denoted by $(A,\prec_A)+(B,\prec_B)$) is the totally ordered set
$(A\cup B, \prec_{A\cup B})$ where $x\prec_{A\cup B}y$ if and only if  ($x\in A$ and $y\in B$) or ($x\prec_Ay, x, y\in A$) or $(x\prec_By, x,y\in B)$.
This addition is associative and therefore parentheses will not be written when we add totally ordered sets.
If the order type of $(A,\prec_A)$ is $\theta_1$ and the order type of $(B,\prec_B)$ is $\theta_2$, then the order type of their sum is denoted by $\theta_1+\theta_2.$

We define an \emph{interval} of $(A,\prec_A)$ to be a subset $B$ of $A$ such that  $x,y\in B, z\in A$ with $x\prec z\prec y$ implies $z\in B$.
An \emph{initial interval} (resp. \emph{final interval}) of a totally ordered set $(A,\prec_A)$ is a subset $I\subset A$ such that for any $y\in I$ and $x\prec y$ (resp. $y\prec x$) one has $x\in I.$  Clearly initial intervals and final intervals are both intervals. For $x,y\in A, x\preccurlyeq y$ the subset $\{z\in A\mid\,x\preccurlyeq z\preccurlyeq y\}$ is an interval and will be denoted by $[x,y]$. Let $A$ and $B$ be two initial intervals of a totally ordered set. Clearly one has either $A\subset B$ or $B\subset A.$

Let $(A,\prec)$ be a totally ordered set and let $B\subset A$ be a subset. The total order of $\prec$ restricted on the subset $B$ will be denoted by $\prec\mid_B.$

We list a few lemmas which will be needed in the subsequent analysis. For the sake of brevity, we omit their straightforward proofs.

\begin{lemma}\label{nanddualordertype}
\begin{enumerate}
\item Let $P$ be an infinite totally ordered set. If every initial interval of the form $\{q\in P\mid\,q\preccurlyeq p\}, p\in P$ (resp. final interval $\{q\in P\mid\,p\preccurlyeq q\}, p\in P$) of $P$ is finite, then $P$ has the order type of $\omega$ (resp. $\omega^*$).
\item Let $(A,\prec)$ be a countably infinite totally ordered set. Suppose that $A=B\uplus C$ where $C$ is finite and that $(B,\prec\mid_B)$ is of order type $\omega^*$. Then
$(A, \prec)$ is of order type $[k]+\omega^*$ for some $k\geq 0$.
\item Let $(A,\prec)$ be a totally ordered set. Assume that $Q_i,1\leq i\leq t$ are $t$ nonempty initial intervals of $A$ such that $Q_1\varsubsetneqq Q_2 \varsubsetneqq \cdots \varsubsetneqq Q_t=A$. Write $P_1=Q_1, P_i=Q_i\backslash Q_{i-1}, 2\leq i\leq t$. Then $(A,\prec)$ is isomorphic to
$$(P_1,\prec)+(P_2,\prec)+\cdots+(P_t,\prec).$$
\end{enumerate}
\end{lemma}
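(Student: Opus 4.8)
The plan is to prove the three parts in turn by elementary explicit constructions. For \emph{(1)}, I would build an order isomorphism $f\colon P\to\mathbb{N}$ by hand. Writing $P_{\preccurlyeq p}=\{q\in P\mid q\preccurlyeq p\}$, the hypothesis makes each $P_{\preccurlyeq p}$ a finite chain, so I can set $f(p)=|P_{\preccurlyeq p}|-1$, the number of elements strictly below $p$. If $p\prec p'$ then $P_{\preccurlyeq p}\subsetneq P_{\preccurlyeq p'}$ (the latter contains $p'$, the former does not), so $f(p)<f(p')$; since $\prec$ is total, this already shows $f$ is an order embedding. For surjectivity, $P$ has a minimum (the least element of any $P_{\preccurlyeq p}$), so $0\in f(P)$; and if $f(p)=n\ge 1$, listing $P_{\preccurlyeq p}=\{q_0\prec\cdots\prec q_n=p\}$ gives $f(q_j)=j$ for every $j$, so $f(P)$ is an initial interval of $\mathbb{N}$, and being infinite it equals $\mathbb{N}$. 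The parenthetical dual statement follows by applying this to $(P,\prec^*)$; I will also use below the resulting fact that every proper final interval of a chain of type $\omega^*$ is finite.

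For \emph{(2)}, the idea is to split off a finite initial interval after which the order is of type $\omega^*$, and then appeal to (3). For $c\in C$, the set $B_{\succ c}=\{b\in B\mid c\prec b\}$ is a final interval of $(B,\prec\mid_B)$, hence (as $B$ has type $\omega^*$) is either all of $B$ or finite; let $C_1=\{c\in C\mid B_{\succ c}=B\}$ and $C_2=C\setminus C_1$. First, $C_1$ is an initial interval of $A$: if $c\in C_1$ and $a\prec c$, then $a\prec b$ for all $b\in B$, so $a\notin B$, whence $a\in C$ with $B_{\succ a}=B$, that is, $a\in C_1$. Second, $A\setminus C_1=B\cup C_2$ is countably infinite and each of its elements has only finitely many elements above it within $B\cup C_2$: for $b\in B$ because $\{b'\in B\mid b\prec b'\}$ is finite and $C_2$ is finite, and for $c\in C_2$ because $B_{\succ c}$ is finite by the choice of $C_2$ and $C_2$ is finite. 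By the dual of (1), $B\cup C_2$ has order type $\omega^*$. Since $A=C_1\uplus(B\cup C_2)$ with $C_1$ a finite initial interval, part (3) yields $(A,\prec)\cong(C_1,\prec)+(B\cup C_2,\prec)=[k]+\omega^*$, where $k=|C_1|$.

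For \emph{(3)}, the isomorphism is the identity on the underlying set. An induction on $i$ gives $Q_i=P_1\uplus\cdots\uplus P_i$, so the $P_i$ are pairwise disjoint and cover $A=Q_t$, and $\prec$ restricts on each $P_i$ to the order used in the sum; the only remaining point is the cross-comparisons. If $x\in P_i$ and $y\in P_j$ with $i<j$, then $x\in Q_i\subseteq Q_{j-1}$ while $y\in Q_j\setminus Q_{j-1}$, so $y\notin Q_i$; as $Q_i$ is an initial interval containing $x$ but not $y$, totality forces $x\prec y$, which is exactly the order of $(P_1,\prec)+\cdots+(P_t,\prec)$. None of this is difficult; the only step needing a little care is the identification in (2) of the correct finite initial interval as $C_1$, the elements of $C$ lying below all of $B$, together with the observation that the elements of $C$ sitting high up inside $B$ are harmless and get absorbed into the $\omega^*$-tail.
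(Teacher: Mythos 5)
The paper states this lemma without proof (``we omit their straightforward proofs''), so there is no argument to compare against; your proposal is a correct filling-in of the details, and the explicit map $p\mapsto |\{q\mid q\preccurlyeq p\}|-1$ in (1), the splitting $C=C_1\uplus C_2$ by whether $c$ lies below all of $B$ in (2), and the identity-map verification in (3) are all sound. The only point worth flagging is the degenerate case $C_1=\emptyset$ in (2), where the appeal to (3) is vacuous since (3) assumes the $Q_i$ nonempty; there $A=B\cup C_2$ is already of type $\omega^*=[0]+\omega^*$ directly, so nothing is lost.
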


One can totally order $\{P_i\}$ in (3) naturally, i.e. $P_i\preccurlyeq' P_j$ if and only if $i\leq j.$ The resulting totally ordered set $(\{P_i\}_{i=1}^{t},\prec')$ (or by abuse of terminology $\{Q_i\}_{i=1}^{t}$) is called a \emph{condensation} of $(A,\prec)$.

\subsection{Root system and reflection order}
We refer the reader to \cite{bjornerbrenti}, \cite{Bourbaki} and \cite{Hum} for the basics of Coxeter groups and their root systems.
Let $(W,S)$ be a Coxeter system.
Its  \emph{root system}, the set of \emph{positive roots}, the set of \emph{negative roots} and the set of \emph{simple roots} are denoted by $\Phi_W, \Phi^+_W$, $\Phi^-_W$ and $\Delta_W$ respectively.
The set of \emph{reflections} of $W$ is the set $\{wsw^{-1}\mid\, w\in W, s\in S\}$. Under the canonical bijection between the set of reflections and $\Phi^+_W$, we denote by $s_{\alpha}$ the  reflection corresponding to the positive root $\alpha\in \Phi^+_W$.

A subset $B$ of $\Phi_W$ is said to be \emph{closed} if for any $\alpha,\beta\in B, k_1,k_2\in \mathbb{R}_{\geq0}$ with $k_1\alpha+k_2\beta\in \Phi$ one has $k_1\alpha+k_2\alpha\in B.$
Let $\Gamma$ be a subset of $\Phi_W.$ A subset $B$ of $\Gamma$ is said to be \emph{biclosed} in $\Gamma$ if both $B$ and $\Gamma\backslash B$ are closed.

There is a complete description of biclosed set in $\Phi_W$ where $\Phi_W$ is an irreducible crystallographic root system of a finite Weyl group $W$ in \cite{biclosedphi} and \cite{DyerReflOrder}.  Denote by $\Phi_{\Delta'}$ the root subsystem of $\Phi$ generated by $\Delta'\subset \Phi$.  The biclosed sets in $\Phi_W$ are  subsets of $\Phi_W$ of the form $(\Psi^+\backslash \Phi_{\Pi_1})\cup \Phi_{\Pi_2}$ where $\Psi^+$ is a positive system of $\Phi_W$ and $\Pi_1,\Pi_2$ are two orthogonal subsets (i.e. $(\alpha,\beta)=0$ for any $\alpha\in \Pi_1,\beta\in \Pi_2$) of the set of simple roots of $\Psi^+.$ Following \cite{DyerReflOrder}  we denote the set $(\Psi^+\backslash \Phi_{\Pi_1})\cup \Phi_{\Pi_2}$ by $\Psi^+_{\Pi_1,\Pi_2}$.

 A \emph{reflection order} $\prec$ of $(W,S)$ is a total order on $\Phi^+_W$ such that for all $\alpha,\beta\in \Phi^+_W$ with $\alpha\prec\beta$ and $k_1,k_2\in \mathbb{R}_{>0}$ such that $k_1\alpha+k_2\beta\in \Phi^+_W$ one has that $\alpha\prec k_1\alpha+k_2\beta\prec \beta.$ Equivalently a total order on $\Phi^+$ is a reflection order if and only if all of its initial intervals are biclosed in $\Phi_W^+.$

For $W$ an irreducible affine Weyl group, its root system $\Phi_W$ can be constructed from the crystallographic root system $\Phi_{W_0}$ of the corresponding irreducible Weyl group $W_0$. Let $V$ be the real Euclidean space containing $\Phi_{W_0}$ such that $\mathbb{R}\Phi_{W_0}=V$. Construct a real vector space $V'=V\oplus \mathbb{R}\delta$ where $\delta$ is an indeterminant. Extend the inner product $(,)$ on $V$ to $V'$ by requiring $(V',\delta)=0.$ For $\alpha\in \Phi_{W_0}^+$ (the chosen standard positive system of $\Phi_{W_0}$), define $\alpha^E$ to be the set $\{\alpha+n\delta\mid\,n\geq 0\}$. For $\alpha\in \Phi_{W_0}^-$, define $\alpha^E$ to be the set $\{\alpha+n\delta\mid\,n>0\}$. For $B\subset \Phi_{W_0}$, define $B^E=\cup_{\alpha\in B}\alpha^E$. Then $\Phi_{W}^+=(\Phi_{W_0})^E$ and $\Delta_{W}=\Delta_{W_0}\cup\{\delta-\rho\}$ where $\rho$ is the highest root of $\Phi_{W_0}^+$. For $\alpha\in \Phi_{W_0}^+$ define $\alpha_0=\alpha$ and for $\alpha\in \Phi_{W_0}^-$ define $\alpha_0=\alpha+\delta.$ Let $B$ be a biclosed sets in $\Phi^+_W$. It can be easily verified that $B_0:=\{\alpha\in \Phi_{W_0}\mid \alpha^E\cap B\,\text{is}\,\text{infinite}\}$ is biclosed in $\Phi_{W_0}.$ In this paper the rank of $W$ is understood to be the rank of its associated Weyl group $W_0$.

For $W$ an irreducible affine Weyl group, the biclosed sets in $\Phi^{+}_W$ are determined in \cite{DyerReflOrder} and recently independently in \cite{combinbiclosed}.
Let $W_i, i=1,2$ be the reflection subgroup of $W$ generated by the reflections corresponding to the roots in $\Phi_{\Delta_i}^E,i =1,2$. Then any biclosed set in $\Phi^+_W$ is of the form $(\Psi^+_{\Delta_1,\Delta_2})^E\cup (\Phi_u\cap \Phi_{\Delta_1}^E)\backslash (\Phi_v\cap \Phi_{\Delta_2}^E)$ for some $\Psi^+,\Delta_1,\Delta_2$ and $u\in W_1, v\in W_2.$

\section{A condensation of the reflection order}\label{condensation}

Now let $W$ be an irreducible affine Weyl group. Denote by $\Phi_{W_0}$ the crystallographic root system of the associated Weyl group $W_0.$
In this section we will divide the reflection order $\prec$ into several intervals whose order types are either $\omega$ or $\omega^*+[k]$ for some nonnegative integer $k$.

\begin{lemma}\label{dihedral}
For a root $\alpha\in \Phi_{W_0}$, the restriction of a reflection order $\prec$ of $W$ on the set $\{\pm\alpha\}^E$  is either
$$(\alpha)_0\prec (\alpha)_0+\delta \prec (\alpha)_0+2\delta \prec \cdots$$
$$\cdots \prec (-\alpha)_0+2\delta\prec (-\alpha)_0+\delta \prec (-\alpha)_0$$ or
$$(-\alpha)_0\prec (-\alpha)_0+\delta \prec (-\alpha)_0+2\delta \prec \cdots$$
$$\cdots \prec (\alpha)_0+2\delta\prec (\alpha)_0+\delta \prec (\alpha)_0.$$
\end{lemma}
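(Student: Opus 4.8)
The plan is to use that $\widehat{\{\pm\alpha\}}$ is exactly the set of positive roots of $W$ lying in the plane $P:=\mathbb{R}\alpha\oplus\mathbb{R}\delta$, so that the reflection-order axiom, applied only to roots inside $P$, already determines $\prec$ on this set. Write $a:=(\alpha)_0$ and $b:=(-\alpha)_0$; in every case $a+b=\delta$, and $\widehat{\{\pm\alpha\}}=\{a_n:=a+n\delta\mid n\geq 0\}\cup\{b_n:=b+n\delta\mid n\geq 0\}$, with $a_0=(\alpha)_0$ and $b_0=(-\alpha)_0$. Since $\Phi_{W_0}$ is reduced, the only roots of $W$ lying in $P$ are the $\pm\alpha+n\delta$, $n\in\mathbb{Z}$, and intersecting with $\Phi^+_W$ recovers precisely the $a_n$ and $b_n$; moreover $a$ and $\delta$ are linearly independent, so all these roots are pairwise distinct.

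Next I would record which positive combinations of two of these roots are again roots. A direct computation gives: (i) for $n<m$ the roots of the form $k_1a_n+k_2a_m$ ($k_1,k_2\in\mathbb{R}^+$) are exactly the $a_p$ with $n<p<m$, and similarly for the $b$'s; and (ii) the roots of the form $k_1a_n+k_2b_m$ are exactly the $a_p$ with $p>n$ together with the $b_r$ with $r>m$ (the two alternatives corresponding to the coefficient of $\alpha$ being $+1$ or $-1$). From (i) and the reflection-order axiom, for no $i$ can $a_{i+1}$ be the $\prec$-largest or $\prec$-smallest of $\{a_i,a_{i+1},a_{i+2}\}$, and the same for the $b$'s; hence the sequence $(a_n)_n$ is $\prec$-monotone, and so is $(b_n)_n$.

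By the symmetry $\alpha\leftrightarrow-\alpha$, which interchanges $a$ and $b$, I may assume $a_0\prec b_0$. Applying (ii) with $n=m=0$, every $a_p$ $(p\geq 1)$ and every $b_r$ $(r\geq 1)$ lies in the $\prec$-interval $[a_0,b_0]$; in particular $a_0\prec a_1$ and $b_1\prec b_0$, so by monotonicity $a_0\prec a_1\prec a_2\prec\cdots$ and $\cdots\prec b_2\prec b_1\prec b_0$. Finally, $a_n\prec b_m$ for all $n,m$: if instead $b_m\prec a_n$, then by (ii) the root $a_{n+1}$ would lie in $[b_m,a_n]$, forcing $a_{n+1}\preccurlyeq a_n$ and contradicting the increase of the $a$-sequence. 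Assembling these facts yields exactly the first displayed chain, and the case $b_0\prec a_0$ yields the second.

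The only genuine computation is item (ii) of the second step, and the place to be careful is the asymmetry between $\widehat{\alpha}$ for positive and for negative $\alpha$ and the resulting shift relating $-\alpha+q\delta$ to $b_r$; once the two lists of intermediate roots are correctly identified, the order-theoretic deductions are immediate.
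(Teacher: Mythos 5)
Your proof is correct. It rests on the same basic observation as the paper's --- that $\widehat{\{\pm\alpha\}}$ is the positive system of the rank-two (infinite dihedral) subsystem spanned by $(\alpha)_0$ and $(-\alpha)_0$ --- but where the paper simply quotes the known classification of reflection orders of an infinite dihedral Coxeter system (there are exactly two, each walking the positive roots from one simple root toward the other), you prove that classification from scratch out of the convexity axiom: your item (i) gives $\prec$-monotonicity of each of the two sequences $(a_n)$ and $(b_n)$ via betweenness of $a_{i+1}$ in $\{a_i,a_{i+2}\}$, and item (ii) with $n=m=0$ pins down the directions of monotonicity and forces every $a_p$ and $b_r$ into the interval $[a_0,b_0]$, after which the separation $a_n\prec b_m$ follows. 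The computations in (i) and (ii) check out, including the shift coming from $(-\alpha)_0=-\alpha+\delta$. What your route buys is self-containedness, and it also silently handles a point the paper's citation glosses over, namely that the restriction of $\prec$ to the subsystem is again a reflection order of the subsystem (this is automatic here because positive combinations of roots in the plane $\mathbb{R}\alpha\oplus\mathbb{R}\delta$ that are roots stay in that plane, which is exactly your items (i) and (ii)); the cost is a page of elementary root arithmetic in place of a one-line appeal to a standard fact.
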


\begin{proof}
One notes that the set $\{\pm \alpha\}^E$ is the set of positive roots for the infinite dihedral reflection subgroup $W'=\langle\{s_{(\alpha)_0},s_{(-\alpha)_0}\}\rangle$  (with the simple reflections  $s_{(\alpha)_0},s_{(-\alpha)_0}$). For an infinite dihedral Coxeter system with simple roots $\beta,\gamma$ there  exists only two reflection orders (Definition 2.1 in \cite{dyerhecke}):
$$\beta\prec s_{\beta}(\gamma)\prec s_{\beta}s_{\gamma}(\beta)\prec \cdots \prec s_{\gamma}s_{\beta}(\gamma)\prec s_{\gamma}(\beta)\prec\gamma,$$
$$\gamma\prec s_{\gamma}(\beta)\prec s_{\gamma}s_{\beta}(\gamma)\prec \cdots \prec s_{\beta}s_{\gamma}(\beta)\prec s_{\beta}(\gamma)\prec\beta.$$
Therefore the assertion follows.
\end{proof}
Now we fix a reflection order $\prec$ on $\Phi^+_{W}$.
We call a root $\alpha\in \Phi_{W_0}$  an \emph{initial root} if  one gets
$$(\alpha)_0\prec (\alpha)_0+\delta \prec (\alpha)_0+2\delta \prec \cdots$$
$$\cdots \prec (-\alpha)_0+2\delta\prec (-\alpha)_0+\delta \prec (-\alpha)_0$$
when we restrict $\prec$ on $\{\pm\alpha\}^E$.

\begin{proposition}
Initial roots form a positive system $\Psi^+_{W_0}$ in $\Phi_{W_0}$.
\end{proposition}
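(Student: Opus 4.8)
The plan is to show that the set $\Psi^+_{W_0}$ of initial roots satisfies the two defining properties of a positive system in $\Phi_{W_0}$: first, for each $\alpha \in \Phi_{W_0}$ exactly one of $\alpha, -\alpha$ lies in $\Psi^+_{W_0}$; second, $\Psi^+_{W_0}$ is closed, i.e. if $\alpha, \beta \in \Psi^+_{W_0}$ and $k_1\alpha + k_2\beta \in \Phi_{W_0}$ with $k_1, k_2 \in \mathbb{R}_{>0}$, then $k_1\alpha + k_2\beta \in \Psi^+_{W_0}$. Together with the fact that $\Psi^+_{W_0}$ and its complement partition $\Phi_{W_0}$, these two properties characterize positive systems of a (finite) crystallographic root system; equivalently, I can check that $\Psi^+_{W_0}$ is biclosed in $\Phi_{W_0}$ and contains exactly one of each $\pm\alpha$, which forces it to be a positive system.

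The first property is immediate from Lemma \ref{dihedral}: restricting $\prec$ to $\widehat{\{\pm\alpha\}}$ gives one of exactly two possibilities, and by definition $\alpha$ is an initial root precisely in the first case and $-\alpha$ is an initial root precisely in the second; so exactly one of $\alpha, -\alpha$ is initial. The substance is in the closure property. Suppose $\alpha, \beta$ are initial roots and $\gamma = k_1\alpha + k_2\beta \in \Phi_{W_0}$ with $k_1, k_2 > 0$. I want to compare $(\gamma)_0$ against $(\alpha)_0$ and $(\beta)_0$ in $\prec$ and also rule out that $(-\gamma)_0 \prec (\gamma)_0$. The idea is to work inside $\Phi^+_W = \widehat{\Phi_{W_0}}$, where for large $n$ we have honest positive roots $\alpha + n\delta, \beta + n\delta$, and a positive real combination of these is $\gamma + (k_1+k_2)n\delta$; choosing $n$ appropriately (or rescaling) we get a root of the form $\gamma + m\delta \in \widehat{\gamma}$ lying strictly $\prec$-between $\alpha + n\delta$ and $\beta + n\delta$ by the reflection-order axiom. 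Since $\alpha, \beta$ are initial, the roots $\alpha + n\delta$ and $\beta + n\delta$ sit in the "low" (increasing, $\omega$-like) part of their respective dihedral strings, arbitrarily far out; this should pin $\gamma + m\delta$ into the low part of $\widehat{\{\pm\gamma\}}$ as well, i.e. force $(\gamma)_0 \prec (-\gamma)_0 + \delta \prec (-\gamma)_0$, which says exactly that $\gamma$ is an initial root. To be careful about which sign of $\gamma$ we land on, I would use that $(\alpha)_0 + n\delta = \alpha + n\delta$ and $(\beta)_0 + n\delta = \beta + n\delta$ can be taken with the same large $n$, so their positive combination really is $\gamma + n(k_1+k_2)\delta$ with a positive $\delta$-coefficient, hence equals $(\gamma)_0 + (\text{something})\delta$ when $\gamma \in \Phi^+_{W_0}$, and equals $(\gamma)_0 + (\text{something} - 1)\delta$ when $\gamma \in \Phi^-_{W_0}$ — either way it lies in $\widehat{\gamma}$ and, for $n$ large, is not the bottom element $(\gamma)_0$ itself, so $(\gamma)_0$ precedes it, while it precedes $(\beta)_0$; chaining with the dihedral description of $\widehat{\{\pm\gamma\}}$ gives $\gamma \in \Psi^+_{W_0}$.

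The main obstacle I anticipate is the bookkeeping in this last step: translating "$\alpha$ and $\beta$ are initial" into a usable statement about where finitely many roots $\alpha+n\delta$, $\beta+n\delta$ sit relative to the reflection order, and then correctly identifying the position of the combined root $\gamma + m\delta$ within the string for $\widehat{\{\pm\gamma\}}$, keeping track of the two cases $\gamma \in \Phi^+_{W_0}$ vs. $\gamma \in \Phi^-_{W_0}$ and the off-by-one in the definition of $\widehat{\alpha}$ for negative $\alpha$. A clean way to package it is: for each $\alpha \in \Phi_{W_0}$, $\alpha$ is initial iff for all sufficiently large $n$ the root $\alpha + n\delta$ $\prec$-precedes all but finitely many elements of $\widehat{\{\pm\alpha\}}$, equivalently iff $(\alpha)_0$ is the $\prec$-minimum of $\widehat{\{\pm\alpha\}}$. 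Phrased that way, the reflection-order inequality $\alpha+n\delta \prec \gamma+m\delta \prec \beta+n\delta$ combined with $(\alpha)_0 = \min \widehat{\{\pm\alpha\}}$ and $(\beta)_0 = \min \widehat{\{\pm\beta\}}$ localizes $\gamma + m\delta$ and yields $(\gamma)_0 = \min\widehat{\{\pm\gamma\}}$ after invoking Lemma \ref{dihedral} once more. Finally, since $\Psi^+_{W_0}$ is closed, contains exactly one of $\pm\alpha$ for every $\alpha$, and $\Phi_{W_0} \setminus \Psi^+_{W_0} = -\Psi^+_{W_0}$ is therefore also closed, $\Psi^+_{W_0}$ is biclosed with the correct cardinality and hence is a positive system.
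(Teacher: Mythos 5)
Your overall strategy---show that the set of initial roots contains exactly one of each pair $\pm\alpha$ and is closed under positive root combinations, then invoke the standard characterization of positive systems (Bourbaki VI, 1.7, Cor.\ 1)---is the same as the paper's, and the ``exactly one of $\pm\alpha$'' half is fine. But the closure step has a genuine gap, and it is not the ``bookkeeping'' you flag at the end. From $\alpha+n\delta\prec\gamma+m\delta\prec\beta+n\delta$ (or its reverse) you learn where $\gamma+m\delta$ sits relative to elements of $\widehat{\alpha}$ and $\widehat{\beta}$, but nothing about where the elements of $\widehat{-\gamma}$ sit, and it is precisely the relative position of $\widehat{\gamma}$ and $\widehat{-\gamma}$ that decides whether $\gamma$ is initial. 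The claim that initiality of $\alpha$ and $\beta$ ``pins $\gamma+m\delta$ into the low part of $\widehat{\{\pm\gamma\}}$'' is asserted, not proved: applying betweenness only to the decompositions $\gamma=k_1\alpha+k_2\beta$ and $-\gamma=k_1(-\alpha)+k_2(-\beta)$ does not exclude a local configuration such as $\alpha+n\delta\prec-\alpha+i\delta\prec-\gamma+k\delta\prec\gamma+m\delta\prec\beta+n\delta\prec-\beta+j\delta$, which is compatible with $\alpha,\beta$ initial and $-\gamma$ initial.

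The missing idea---and the heart of the paper's proof---is to bring the hypothetical ``wrong'' element of $\widehat{-\gamma}$ into contact with one of the known strings via a \emph{second}, mixed decomposition. Writing $\gamma=\beta+\eta$ with $\beta,\eta$ initial and assuming $\gamma$ is not initial, one has $-\gamma+\delta\prec\gamma+p\delta$; since $-\eta+(n+1)\delta=(-\gamma+\delta)+(\beta+n\delta)$ is again a positive root, betweenness places $-\eta+(n+1)\delta$ below the larger of $-\gamma+\delta$ and $\beta+n\delta$, hence below $\gamma+p\delta$, hence below $\eta+m\delta$, contradicting that $\eta$ is initial. Without this use of the identity $-\eta=-\gamma+\beta$ (or an equivalent cross-term), the argument does not close, so as written the proposal is incomplete.
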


\begin{proof}
Clearly $\Psi^+_{W_0}\uplus -\Psi^+_{W_0}=\Phi_{W_0}.$
Suppose that $\gamma=\beta+\eta, \beta,\eta\in \Psi^+_{W_0}$ and $\gamma\in \Phi_{W_0}.$ Take $m,n\in \mathbb{Z}_{>0}.$ Write $p=m+n.$
Then one has either $\beta+n\delta\prec \gamma+p\delta\prec \eta+m\delta$ or $ \eta+m\delta \prec\gamma+p\delta\prec \beta+n\delta$. Without loss of generality, assume that the former is the case.
Suppose that $\gamma\not\in \Psi^+_{W_0}.$ Then $-\gamma+\delta\prec \gamma+p\delta$. Note that $-\eta+(n+1)\delta=(-\gamma+\delta)+(\beta+n\delta)$.
This implies that $-\eta+(n+1)\delta$ is less than one of $\beta+n\delta$ and $-\gamma+\delta$. Hence $-\eta+(n+1)\delta\prec \gamma+p\delta\prec \eta+m\delta.$ A contradiction to the fact $\eta\in \Psi^+_{W_0}$. So $\gamma\in \Psi^+_{W_0}$. Then the assertion follows from \cite{Bourbaki} Chapitre VI, 1.7, Corollaire 1.
\end{proof}

From now on, we will denote the positive system in the previous proposition by $\Psi^+_{W_0,\prec}.$

Let us describe the way that we construct our condensation before delving into the precise definition.
For a root $\alpha\in \Psi^+_{W_0,\prec}$, we will associate two initial intervals. This first interval $J(\alpha)$ is the minimal initial interval containing all roots in the chain $$(\alpha)_0\prec (\alpha)_0+\delta \prec (\alpha)_0+2\delta \prec \cdots.$$ 
The second interval, which will be denoted $J'([\alpha])$, is the minimal initial interval containing almost all roots in $\{\pm\alpha\}^E$. (The precise definition will be given shortly after.) It may appear that there are $|\Phi_{W_0}|$ many such intervals. However for different $\alpha$ and $\beta$,
$J(\alpha)$ and $J(\beta)$ (resp. $J'([\alpha])$ and $J'([\beta])$) may be equal. Therefore there are   $m \,(m\leq |\Phi_{W_0}|)$  such initial intervals. We naturally order these intervals together with the empty set and obtain a chain $J(\prec)$. A key ingredient of our proof of the main result is that if $J_2$ is the immediate successor of $J_1$ in $J(\prec)$, the order type of $\prec$ restricted on $J_2\backslash J_1$ is either $\omega$ or $\omega^*+[k]$ for some non-negative integer $k.$

\begin{lemma}
For $\alpha\in \Psi^+_{W_0,\prec}$, there exists $p\geq 0$ such that when $q\geq p$ the closed interval $[(-\alpha)_0+(q+1)\delta, (-\alpha)_0+q\delta]$ under $\prec$ is finite.
\end{lemma}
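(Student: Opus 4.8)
The plan is to study the descending chain of initial intervals $B_q:=\{\gamma\in\Phi^+_W\mid\gamma\preccurlyeq(-\alpha)_0+q\delta\}$, $q\ge 0$. Since $\prec$ is a reflection order each $B_q$ is biclosed in $\Phi^+_W$, and since $\alpha$ is an initial root Lemma~\ref{dihedral} gives $(-\alpha)_0\succ(-\alpha)_0+\delta\succ(-\alpha)_0+2\delta\succ\cdots$, so that $B_0\supseteq B_1\supseteq B_2\supseteq\cdots$. Because the closed interval in the statement equals $(B_q\setminus B_{q+1})\cup\{(-\alpha)_0+(q+1)\delta\}$, it is enough to produce $p$ such that $B_q\setminus B_{q+1}$ is finite for all $q\ge p$.

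First I would push the question down to the finite root system. By the observation recalled in Section~\ref{bkg}, the set $(B_q)_0:=\{\beta\in\Phi_{W_0}\mid\widehat\beta\cap B_q\text{ is infinite}\}$ is biclosed in $\Phi_{W_0}$ for every $q$, and $(B_0)_0\supseteq(B_1)_0\supseteq\cdots$ is a descending chain of subsets of the \emph{finite} set $\Phi_{W_0}$; hence it stabilizes, say $(B_q)_0=(B_p)_0$ for all $q\ge p$. I claim this $p$ works.

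So fix $q\ge p$ and suppose for contradiction that $B_q\setminus B_{q+1}$ is infinite. Using the finite partition $\Phi^+_W=\biguplus_{\beta}\widehat{\{\pm\beta\}}$ (the union over the positive roots $\beta$ of a positive system of $\Phi_{W_0}$), some single $\widehat{\{\pm\beta\}}$ meets $B_q\setminus B_{q+1}$ in an infinite set; being the intersection of an interval of $(\Phi^+_W,\prec)$ with $\widehat{\{\pm\beta\}}$, this set is an interval of $\widehat{\{\pm\beta\}}$. By Lemma~\ref{dihedral} the ordered set $\widehat{\{\pm\beta\}}$ is an $\omega$-block $\widehat\nu$ followed by an $\omega^{*}$-block $\widehat\mu$, where $\nu$ is the initial one of $\pm\beta$ and $\mu$ the other; an infinite interval of such an ordered set either contains a final segment of $\widehat\nu$ or contains an order-initial (hence infinite) segment of $\widehat\mu$. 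In the first case $\widehat\nu\cap B_q$ is infinite, while $\widehat\nu\cap B_{q+1}$ — an initial interval of the $\omega$-ordered set $\widehat\nu$ omitting the least element of that final segment — is finite. In the second case $\widehat\mu\cap B_q$ is infinite, while every element of $\widehat\mu$ lies strictly above $(-\alpha)_0+(q+1)\delta$ (the said order-initial segment does, and $\widehat\mu$ is $\omega^{*}$-ordered), so $\widehat\mu\cap B_{q+1}=\emptyset$. Either way we obtain a root $\rho\in\Phi_{W_0}$ with $\widehat\rho\cap B_q$ infinite and $\widehat\rho\cap B_{q+1}$ finite, i.e.\ $\rho\in(B_q)_0\setminus(B_{q+1})_0$, contradicting the stability of the chain. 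Hence $B_q\setminus B_{q+1}$ is finite for every $q\ge p$, which is what we wanted.

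The step I expect to be the main obstacle is the one just used: extracting, from an infinite interval of $\widehat{\{\pm\beta\}}$, a full tail of affine roots sharing one fixed finite part. It rests on Lemma~\ref{dihedral} (so that $\widehat{\{\pm\beta\}}$ genuinely has order type $\omega+\omega^{*}$) together with the elementary classification of intervals of $\omega+\omega^{*}$, and it is essential that such a tail contains \emph{all} sufficiently large multiples of $\delta$, since that is exactly what forces the finite part $\rho$ out of $(B_{q+1})_0$ while keeping it in $(B_q)_0$.
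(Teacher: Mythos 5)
Your proof is correct and is essentially the paper's argument: both rest on partitioning $\Phi^+_W$ into the finitely many dihedral pieces $\widehat{\{\pm\beta\}}$, using Lemma~\ref{dihedral} to see that an infinite intersection of an interval with some $\widehat{\beta}$ must be a full tail $\{(\beta)_0+k\delta\mid k\geq d\}$, and then applying a pigeonhole over the finite set $\Phi_{W_0}$. The only (cosmetic) difference is bookkeeping: the paper observes directly that each $\beta$ can be blamed for at most one infinite interval, whereas you package the same count as stabilization of the decreasing chain $(B_q)_0$ of subsets of $\Phi_{W_0}$.
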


\begin{proof}
Since $\Phi_{W_0}$ is finite, for an infinite interval $[(-\alpha)_0+(q_i+1)\delta, (-\alpha)_0+q_i\delta]$ there exists $\beta\in \Phi_{W_0}$, $[(-\alpha)_0+(q_i+1)\delta, (-\alpha)_0+q_i\delta]\cap \beta^E$ is infinite. By Lemma \ref{dihedral} we conclude that
$[(-\alpha)_0+(q_i+1)\delta, (-\alpha)_0+q_i\delta]\cap \beta^E=\{(\beta)_0+q\delta|\,q\geq d\}$ for some $d\geq 0.$
Hence at most one such infinite interval $[(-\alpha)_0+(q_i+1)\delta, (-\alpha)_0+q_i\delta]$ has infinite intersection with $\beta^E$.
Again since $\Phi_{W_0}$ is finite, there can only exist finite many $q_i$ such that $[(-\alpha)_0+(q_i+1)\delta, (-\alpha)_0+q_i\delta]$ under $\prec$ is infinite.
\end{proof}

By the above lemma, for $\alpha\in \Psi^+_{W_0,\prec}$, we can define $n(\alpha)$ to be the smallest number such that whenever $q\geq n(\alpha)$ the closed interval $[(-\alpha)_0+(q+1)\delta, (-\alpha)_0+q\delta]$ under $\prec$ is finite. We define an equivalence relation on $\Psi^+_{W_0, \prec}$: $\alpha\sim\beta$ if and only if the nonempty one of the intervals $[(-\alpha)_0+n(\alpha)\delta, (-\beta)_0+n(\beta)\delta]$ and $[(-\beta)_0+n(\beta)\delta,(-\alpha)_0+n(\alpha)\delta]$ under $\prec$ is finite. The equivalence class containing $\alpha$ will be denoted by $[\alpha]$.
Set  $$m([\alpha])=\{\beta\in [\alpha]\mid (-\beta)_0+n(\beta)\delta\succcurlyeq (-\gamma)_0+n(\gamma)\delta, \forall \gamma\in [\alpha]\}.$$

For $\alpha\in \Psi^+_{W_0,\prec}$,
 write $J(\alpha)=\{\gamma\in \Phi^+_{W}\mid\gamma\preccurlyeq (\alpha)_0+n\delta\,\,\text{for}\,\text{some}\,n\geq 0\}$ and $J'([\alpha])=\{\gamma\in \Phi^+_{W}\mid\gamma\preccurlyeq (-m([\alpha]))_0+n(m([\alpha]))\delta\}$. By definition $\{\alpha_0+n\delta|\, n\geq 0\}\subset J(\alpha)$.

Consider the set $$J(\prec)=\{J(\alpha)\mid\,\alpha\in \Psi^+_{W_0,\prec}\}\cup \{J'([\alpha])\mid\,\alpha\in \Psi^+_{W_0,\prec}\}\cup \{\emptyset\}.$$ Clearly $J(\alpha),J'([\alpha])$ are both initial intervals of  the reflection order $(\Phi^+_W,\prec)$. Therefore for two different sets $M,N\in J(\prec)$, we have either $M\subset N$ or $N\subset M$.
Therefore one can totally order the elements in $J(\prec)$ according to containment. This gives rise to a condensation of the reflection order on $\Phi^+_W$ which plays a crucial role in this paper.

\begin{lemma}\label{first}
The minimal element in $J(\prec)\backslash \{\emptyset\}$ is of the form $J(\alpha)$ for some $\alpha\in \Phi_{W_0}$.
\end{lemma}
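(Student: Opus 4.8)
The plan is to analyze the minimal nonempty element $M$ of $J(\prec)$ directly. By construction $J(\prec)$ consists of the initial intervals $J(\alpha)$, the initial intervals $J'([\alpha])$, and $\emptyset$, all totally ordered by inclusion; so $M$ is the smallest of the $J(\alpha)$'s and $J'([\alpha])$'s. First I would record that every $J(\alpha)$ is a nonempty initial interval (it contains $\{(\alpha)_0 + n\delta \mid n \geq 0\}$), and that $J'([\alpha]) = J(\gamma)$ for some $\gamma \in \Phi_{W_0}$ in a suitable ``large'' sense — but more to the point, I claim $J'([\alpha])$ is never strictly smaller than all the $J(\beta)$'s. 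Indeed, if $\gamma = m([\alpha])$, then $J'([\alpha])$ is the initial interval of elements $\preccurlyeq (-\gamma)_0 + n(\gamma)\delta$; since $(-\gamma)_0 + n(\gamma)\delta$ lies in $\widehat{-\gamma}$ and by Lemma \ref{dihedral} every element of $\widehat{\gamma} = \{(\gamma)_0 + n\delta \mid n \geq 0\}$ precedes every element of $\widehat{-\gamma}$ (as $\gamma \in \Psi^+_{W_0,\prec}$, so $\gamma$ is an initial root), we get $J(\gamma) \subseteq J'([\alpha])$, and in fact $J(\gamma) \subsetneq J'([\alpha])$ because $(-\gamma)_0 + n(\gamma)\delta \in J'([\alpha]) \setminus J(\gamma)$. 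Hence $J'([\alpha])$ is not minimal, and the minimal element must be some $J(\alpha)$.

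The remaining content is then to check that the minimal element is not forced to be $\emptyset$ and that at least one $J(\alpha)$ genuinely attains the minimum; the latter is automatic because $\Psi^+_{W_0}$ is a finite set (the positive roots of the finite Weyl group $W_0$), so $\{J(\alpha) \mid \alpha \in \Psi^+_{W_0}\}$ is a finite totally ordered family under inclusion and thus has a least element. Combining this with the previous paragraph, that least element is also the minimum of $J(\prec) \setminus \{\emptyset\}$. I would phrase the write-up as: let $\alpha$ be such that $J(\alpha)$ is minimal among $\{J(\beta) \mid \beta \in \Psi^+_{W_0}\}$; by the comparison $J(\gamma) \subsetneq J'([\alpha'])$ above for every equivalence class, no $J'([\alpha'])$ can undercut it; therefore $J(\alpha)$ is the minimal element of $J(\prec) \setminus \{\emptyset\}$.

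The one subtlety I would be careful about — and what I expect to be the main (albeit mild) obstacle — is making sure the inequality $(\gamma)_0 + n\delta \prec (-\gamma)_0 + m\delta$ for all $n, m \geq 0$ (when $\gamma$ is an initial root) is invoked with the right indices, in particular that $n(\gamma) \geq 0$ so that $(-\gamma)_0 + n(\gamma)\delta$ is a legitimate element of $\Phi^+_W$ lying in $\widehat{-\gamma}$, and that it is strictly above all of $\widehat{\gamma}$; this is exactly the shape of the order in Lemma \ref{dihedral}, so it is immediate once stated. A second point to verify is simply that $\Psi^+_{W_0} \neq \emptyset$ (true since $W_0$ has rank $n \geq 1$), guaranteeing $J(\prec) \setminus \{\emptyset\}$ is nonempty so that ``the minimal element'' makes sense. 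No deeper input than Lemma \ref{dihedral} and the finiteness of $\Phi_{W_0}$ is needed.
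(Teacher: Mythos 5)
Your proposal is correct and follows essentially the same route as the paper: the paper's one-line proof likewise observes that any $J'([\beta])$ properly contains $J(\beta)$ (you use $J(m([\alpha]))\subsetneqq J'([\alpha])$, which is the same containment with the strictness made explicit via Lemma \ref{dihedral}), so no $J'$ can be minimal. The extra checks you flag (nonemptiness and finiteness of $\Psi^+_{W_0,\prec}$) are fine but not needed beyond what the paper already records.
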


\begin{proof}
If the minimal element is $J'([\beta])$ for some $\beta\in \Psi^+_{W_0,\prec}$,  then $J(\beta)$  will be properly contained  in $J'([\beta])$. A contradiction.
\end{proof}

\begin{lemma}\label{layer}
\begin{enumerate}
\item Suppose that $K,L\in J(\prec)$ and that $L$  is the (immediate) successor of $K$. Then the interval $L\backslash K$ cannot contain both $(\beta)_0+k\delta$ and $(-\beta)_0+l\delta$ for some $\beta\in \Phi_{W_0}$ and $k,l\geq 0$.

\item Suppose that $J_1\in J(\prec)$ and that $J(\alpha)$ is the (immediate) successor of $J_1$. Then the reflection order $\prec$ restricted on $J(\alpha)\backslash J_1$ has the order type $\omega$.

\item Suppose that $J_1\in J(\prec)$ and that $J'([\alpha])$ is the (immediate) successor of $J_1$. Then the reflection order $\prec$ restricted on $J'([\alpha])\backslash J_1$ has the order type $[n]+\omega^*$ for some $n\geq 0$.

\item The whole positive system $\Phi^+_W$ of the affine Weyl group $W$ is equal to $J'([\alpha])$ for some $\alpha\in \Psi^+_{W_0}$.

\item Suppose that $K,L\in J(\prec)$ and that $L$ be the (immediate) successor of $K$. Then $K_0\subsetneqq L_0$.
\end{enumerate}
\end{lemma}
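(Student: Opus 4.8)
The plan is to prove (2) and (3) together (by induction along the chain $J(\prec)$), then deduce (1), then (5), and finally (4), which is the delicate part. Throughout I use that $J(\prec)$, being a finite family of nested initial intervals, is a finite chain $\emptyset=Q_0\subsetneq Q_1\subsetneq\cdots\subsetneq Q_t$ whose least nonzero member is a $J(\alpha)$ (Lemma \ref{first}), that each layer $Q_i\setminus Q_{i-1}$ (the difference of consecutive initial intervals) is an interval of $(\Phi^+_W,\prec)$, and that every member of $J(\prec)$ is biclosed.

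For (2) and (3) I would argue inductively on $i$, carrying the extra bookkeeping that whenever a layer $Q_i\setminus Q_{i-1}$ has type $[k]+\omega^*$ its $\omega^*$-tail is, up to finitely many roots, a final segment of a single dihedral $\omega^*$-chain $\widehat{-\nu}$, so that no earlier member of $J(\prec)$ meets $\widehat{-\nu}$ in infinitely many roots. Granting this, for (2): $J(\alpha)\setminus J_1$ is infinite because $\widehat{\alpha}\subseteq J(\alpha)$ while $\widehat{\alpha}\not\subseteq J_1$ (else $J(\alpha)=\bigcup_n\{\gamma\preccurlyeq\alpha_0+n\delta\}\subseteq J_1$), and for $p\in J(\alpha)\setminus J_1$, picking $n$ with $p\preccurlyeq\alpha_0+n\delta$, the set $\{q\in J(\alpha)\setminus J_1\mid q\preccurlyeq p\}$ lies inside $\{\gamma\preccurlyeq\alpha_0+n\delta\}\setminus J_1$; intersecting the latter with each dihedral chain $\widehat{\{\pm\beta\}}$ and using Lemma \ref{dihedral}, its only possible infinite contribution would come from an $\omega$-part $\widehat{\beta}$ lying entirely below $\alpha_0+n\delta$ but not inside $J_1$, which forces $J(\beta)=J(\alpha)$ --- impossible since $J(\alpha)\supseteq\widehat{\alpha}\not\subseteq\{\gamma\preccurlyeq\alpha_0+n\delta\}$; since $\Phi_{W_0}$ is finite this set is finite, and Lemma \ref{nanddualordertype}(1) gives order type $\omega$. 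Part (3) is parallel: with $\mu=m([\alpha])$, the induction hypothesis guarantees $J_1$ meets $\widehat{-\mu}$ finitely, so the $\omega^*$-tail $\{(-\mu)_0+q\delta\mid q\geq n(\mu)\}$ contributes infinitely to $J'([\alpha])\setminus J_1$ while --- by the same dihedral count together with the choice of $n(\mu)$ --- the rest of $J'([\alpha])\setminus J_1$ is finite; Lemma \ref{nanddualordertype}(2) then gives order type $[k]+\omega^*$ with $\omega^*$-tail inside $\widehat{-\mu}$, closing the induction. I expect this dihedral bookkeeping to be the most laborious, though routine, part of (2)--(3).

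Part (1) follows immediately: assuming (after replacing $\beta$ by $-\beta$ if necessary) that $\beta$ is an initial root, if $(\beta)_0+k\delta$ and $(-\beta)_0+l\delta$ both belonged to a layer $L\setminus K$ then, since $(\beta)_0+k\delta\prec(-\beta)_0+l\delta$ and $L\setminus K$ is an interval, $L\setminus K$ would contain $\{(\beta)_0+m\delta\mid m\geq k\}\cup\{(-\beta)_0+q\delta\mid q\geq l\}$, of order type $\omega+\omega^*$ --- impossible in a set of order type $\omega$ or $[k]+\omega^*$. For (5), $K_0\subseteq L_0$ is automatic from $K\subseteq L$, so it suffices to find a direction in $L_0\setminus K_0$: if $L=J(\alpha)$ then $\widehat{\alpha}\subseteq L$ gives $\alpha\in L_0$ while $\widehat{\alpha}\not\subseteq K$ makes $K\cap\widehat{\{\pm\alpha\}}$ a finite dihedral prefix, so $\alpha\notin K_0$; if $L=J'([\alpha])$ with $\mu=m([\alpha])$ then $L$ contains an infinite final segment of $\widehat{-\mu}$, so $-\mu\in L_0$, whereas $-\mu\notin K_0$ by the bookkeeping established in (3).

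Finally (4). Since $J(\prec)$ is a finite chain it has a maximum $Q_t$, which cannot be a $J(\alpha)$ --- else $J'([\alpha])\supsetneq J(\alpha)=Q_t$ would be a strictly larger member. So $Q_t=J'([\alpha])$ for some $\alpha$, and it remains to show $Q_t=\Phi^+_W$. First, $\Phi^+_W\setminus Q_t$ is finite: a root $\gamma\notin Q_t$ cannot lie in $\widehat{\beta}$ with $\beta$ initial (else $\widehat{\beta}\subseteq J(\beta)\subseteq Q_t$), so $\gamma=(-\mu)_0+q\delta$ with $\mu$ initial, and since $J'([\mu])\subseteq Q_t$ one gets $\gamma\succ\max J'([\mu])=(-m([\mu]))_0+n(m([\mu]))\delta\succcurlyeq(-\mu)_0+n(\mu)\delta$, hence $q<n(\mu)$; thus $\Phi^+_W\setminus Q_t\subseteq\bigcup_{\mu}\{(-\mu)_0+q\delta\mid 0\leq q<n(\mu)\}$, a finite set. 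Now suppose $\Phi^+_W\setminus Q_t\neq\emptyset$; being a nonempty finite final interval it forces $(\Phi^+_W,\prec)$ to possess a greatest element $\gamma^*$, and since an $\omega$-chain $\widehat{\eta}$ has no greatest element, $\gamma^*$ must lie in some $\omega^*$-chain $\widehat{-\mu}$ with $\mu$ initial, where it is the $\prec$-greatest element $(-\mu)_0$. The crux --- and the step I expect to require the most work --- is that $n(\mu)=0$, i.e.\ that every $[(-\mu)_0+(q+1)\delta,(-\mu)_0+q\delta]$ is finite; otherwise some $\omega$-chain $\widehat{\eta}$ (with $\eta$ initial, $\eta\neq\mu$) would have a tail trapped strictly between two consecutive members of $\widehat{-\mu}$, and one should derive a contradiction from the reflection-order inequalities relating $\eta_0+m\delta$, $(-\mu)_0+q\delta$ and a suitable positive combination, using that $\widehat{-\mu}$ is cofinal at the top of $\prec$. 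With $n(\mu)=0$ we get $m([\mu])=\mu$ (as $(-\mu)_0$ then dominates every $(-\gamma)_0+n(\gamma)\delta$) and $J'([\mu])=\{\gamma\preccurlyeq(-\mu)_0\}=\Phi^+_W$, contradicting $Q_t\subsetneq\Phi^+_W$ and finishing the proof.
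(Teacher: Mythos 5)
Your reorganization (proving (2)--(3) first and then reading off (1) and (5)) is legitimate in principle, and your derivation of (1) from the layer types, your argument for (5), and your observation in (4) that $\Phi^+_W\setminus Q_t$ is finite are all sound. But there are two genuine gaps at exactly the places the paper has to work hardest. First, in (2) you claim that the only possible infinite contribution to $\{\zeta\in J(\alpha)\setminus J_1\mid\zeta\preccurlyeq\alpha_0+n\delta\}$ from a dihedral chain $\widehat{\{\pm\beta\}}$ is an $\omega$-part $\widehat{\beta}$. That is not justified: a tail $\{(-\lambda)_0+q\delta\mid q\geq d\}$ of an $\omega^*$-chain can perfectly well sit below $\alpha_0+n\delta$ and outside $J_1$, and ruling this out is the hard half of the paper's argument (its Case II), which shows that such a tail would force $J'([-\lambda])$ to lie strictly between $J_1$ and $J(\alpha)$ in $J(\prec)$ --- an argument that needs the quantities $n(-\lambda)$ and $m([-\lambda])$ and the finiteness of $[(\lambda)_0+n(-\lambda)\delta,(-m([-\lambda]))_0+n(m([-\lambda]))\delta]$. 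Your ``bookkeeping'' inductive hypothesis does not supply this (it only constrains earlier layers), and it is moreover false as stated: the $\omega^*$-tail of a layer $J'([\alpha])\setminus J_1$ is in general a shuffle of the chains $\widehat{-\nu}$ over all $\nu$ in the class $[\alpha]$, not a cofinite piece of a single chain --- see the paper's own $\widetilde{A}_3$ example, where the layer below $J'([\beta+\gamma])$ interleaves $\widehat{-\beta}$ and $\widehat{-\beta-\gamma}$. The same unproved dichotomy is then invoked again for the finiteness claim in your (3).

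Second, in (4) the decisive step is the one you explicitly defer: that $n(\mu)=0$ for the chain $\widehat{-\mu}$ containing the $\prec$-greatest root. Your sketch (``derive a contradiction from the reflection-order inequalities relating $\eta_0+m\delta$, $(-\mu)_0+q\delta$ and a suitable positive combination'') does not close, since a positive combination of $\eta$ and $-\mu$ need not be a root, and an $\omega$-tail trapped between consecutive elements of $\widehat{-\mu}$ is not by itself absurd. The paper instead argues directly on the maximal element $J'([\alpha])$ of $J(\prec)$: if $n(m([\alpha]))>0$ then the interval between two consecutive elements of $\widehat{-m([\alpha])}$ is infinite, hence meets some $\widehat{\{\pm\mu\}}$ in a tail, and that tail produces $J'([\mu])\supsetneq J'([\alpha])$, contradicting maximality. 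Until the Case~II analysis in (2)--(3) and the $n(\mu)=0$ step in (4) are actually carried out, the proposal is an outline rather than a proof.
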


\begin{proof}
(1)
We assume to the contrary there exists such a root $\beta$.  Without loss of generality we assume that $\beta\in \Psi^+_{W_0,\prec}.$ Then this assumption implies that the set $\{(\beta)_0+i\delta\mid i\geq k\}\cup \{(-\beta)_0+j\delta\mid j\geq l\}$ is contained in $L\backslash K.$  Since $\{(\beta)_0+i\delta\mid i\geq k\}\subset J(\beta)\backslash K$, $J(\beta)$  properly contains $K.$
On the other hand, the set $(-\beta)_0+l\delta\in L\backslash J(\beta)$. Therefore $J(\beta)$ is properly contained in $L$. This contradicts to the fact that $L$ is the immediate successor of $K$.

(2) Let $\gamma+m\delta$ $(\gamma\in \Phi_{W_0})$ be a root in $J(\alpha)\backslash J_1$. Next we show that the  set $I:=\{\zeta\in J(\alpha)\backslash J_1|\,\zeta\preccurlyeq\gamma+m\delta\}$ is finite.
If $I$ is infinite, then there must be some $\lambda\in \Phi_{W_0}$ such that $\lambda^E\cap I$ is infinite since $\Phi_{W_0}$ is finite.
There are two situations.

Case I.  $\lambda^E\cap (J(\alpha)\backslash J_1)=\lambda^E\backslash \{(\lambda)_0, (\lambda)_0+\delta\, \cdots, (\lambda)_0+p\delta\},$$$(\lambda)_0\prec (\lambda)_0+\delta\prec \cdots (-\lambda)_0+2\delta\prec(-\lambda)_0+\delta\prec(-\lambda)_0.$$
 (i.e.  $\lambda\in \Psi_{W_0,\prec}^+$.)
This implies $J(\lambda)$ is  contained in $\{\zeta\in J(\alpha)|\,\zeta\preccurlyeq\gamma+m\delta\}$ and subsequently it is properly contained in
 $J(\alpha)$. But it contains properly $J_1$  as $(\lambda)_0+(p+1)\delta$ is not contained in $J_1$. A contradiction to the fact of $J(\alpha)$ is the successor of $J_1$.

 Case II. $\lambda^E\cap (J(\alpha)\backslash J_1)=\lambda^E\backslash \{(\lambda)_0, (\lambda)_0+\delta\, \cdots, (\lambda)_0+p\delta\},$
$$(-\lambda)_0\prec (-\lambda)_0+\delta\prec \cdots (\lambda)_0+2\delta\prec(\lambda)_0+\delta\prec(\lambda)_0.$$
(i.e.  $-\lambda\in \Psi_{W_0,\prec}^+.$) By definition $(\lambda)_0+(p+1)\delta\prec (\alpha)_0+k\delta$ for some $k\geq 0.$ On the other hand, since $(\lambda)_0+p\delta\not\in J(\alpha),$ $(\alpha)_0+t\delta\prec (\lambda)_0+p\delta$ for all $t\geq 0$. So we conclude that
$[(\lambda)_0+(p+1)\delta, (\lambda)_0+p\delta]$ is an infinite interval under $(\Phi^+_{W},\prec).$
This implies that $n(-\lambda)\geq p+1$ and $(\lambda)_0+n(-\lambda)\delta\in J(\alpha)\backslash J_1.$ Write $\mu=m([-\lambda])$. Note that by definition the closed interval $[(\lambda)_0+n(-\lambda)\delta, (-\mu)_0+n(\mu)\delta]$ is finite.  If $(-\mu)_0+n(\mu)\delta\not\in J(\alpha)\backslash J_1,$ then
all $(\alpha)_0+t\delta, t\geq k$ will be less than $(-\mu)_0+n(\mu)\delta$ and greater than $(\lambda)_0+n(-\lambda)\delta$. A contradiction. So $(-\mu)_0+n(\mu)\delta\in J(\alpha)$ as well.
So $J'([-\lambda])$ contains properly $J_1$ and is properly contained in $J(\alpha)$. A contradiction to the fact of $J(\alpha)$ is the immediate successor of $J_1$.

Now since $(J(\alpha)\backslash J_1)\cap \alpha^E$ is infinite, $J(\alpha)\backslash J_1$ is infinite. Hence the reflection order $\prec$ restricted on $J(\alpha)\backslash J_1$ has the order type $\omega$ by Lemma \ref{nanddualordertype} (1).

(3) Now write $B:=\{\lambda\in \Psi^+_{W_0,\prec}|\,\lambda\in [\alpha]\}$. Consider the set $F:=J'([\alpha])\backslash (J_1\cup (-B)^E)$. Next we show that $F$ is finite.

If to the contrary $F$ is infinite, it must contain some $\gamma\in \Phi_{W_0}$ such that $\gamma^E\cap F$ is infinite since $\Phi_{W_0}$ is finite.  Then there are two cases:

Case I: $\gamma\in \Psi^+_{W_0,\prec}$ and $\gamma^E\cap F=\{(\gamma)_0+k\delta|\,k\geq q\}$ for some $q\geq 0.$ Then $J(\gamma)$ will be greater than $J_1$ but less than $J'([\alpha])$ in $J$. A contradiction.

Case II: $\gamma\in \Psi^-_{W_0,\prec}$ and $\gamma^E\cap F=\{(\gamma)_0+k\delta|\,k\geq q\}$ for some $q\geq 0.$
First we show that $(\gamma)_0+n(-\gamma)\delta$ cannot be greater than $(-m([\alpha]))_0+n(m([\alpha]))\delta$. If $(\gamma)_0+n(-\gamma)\delta\succ (-m([\alpha]))_0+n(m([\alpha]))\delta$,  by the definition of the function $m$, the interval $[(-m([\alpha]))_0+n(m([\alpha]))\delta, (\gamma)_0+n(-\gamma)\delta]$ has to be infinite. But this again contradicts the definition of the function $n$ since  for any $(\gamma)_0+t\delta\in F$ one has  $(\gamma)_0+t\delta\prec (-m([\alpha]))_0+n(m([\alpha]))\delta$.
Therefore $(\gamma)_0+n(-\gamma)\delta\preccurlyeq (-m([\alpha]))_0+n(m([\alpha]))\delta.$

Next we show that  $[(\gamma)_0+n(-\gamma)\delta, (-m([\alpha]))_0+n(m([\alpha]))\delta]$ is finite. If not, by the assumption $\{\gamma+k\delta|\,k\geq n(-\gamma)\}$ will be contained in $J'([\alpha])$. So $J'([-\gamma])$ will be greater than $J_1$ but less than $J'([\alpha])$ in $J(\prec)$. A contradiction.

Hence we conclude $-\gamma\in [\alpha]$. This contradicts the definition of $F$.

Therefore we conclude that $F$ is finite.

Now we show that the reflection order restricted on $J'([\alpha])\backslash (J_1\cup F)$ is of type $\omega^*$.
So take $(\lambda)_0+r\delta$ in this set. By definition $-\lambda\in [\alpha]$. Note that $[(\lambda)_0+r\delta, (\lambda)_0+n(-\lambda)\delta]$ is finite by the definition of the function $n$  and that $[(\lambda)_0+n(-\lambda)\delta, (-m([\alpha]))_0+n(m([\alpha]))\delta]$ is finite since $-\lambda\in [\alpha]$. Therefore we conclude that
$[(\lambda)_0+r\delta, (-m([\alpha]))_0+n(m([\alpha]))\delta]$ is finite. Now the assertion that $J'([\alpha])\backslash (J_1\cup F)$ is of type $\omega^*$ follows form Lemma \ref{nanddualordertype} (1). Now (2) follows from Lemma \ref{nanddualordertype} (2).

(4) It is clear that the maximal element in $J(\prec)$ is of the form $J'([\alpha])$.
Take the maximal element $J'([\alpha])$ in $J(\prec).$
It suffices to show that $n(m([\alpha]))=0$. If not, the interval $[(-m([\alpha]))_0+n(m([\alpha]))\delta, (-m([\alpha]))_0+(n(m([\alpha]))-1)\delta]$ is infinite.
Then by the finiteness of $\Phi_{W_0}$ there exists $\mu\in \Psi^+_{W_0,\prec}$ such that $\{\pm \mu\}^E\cap [(-m([\alpha]))_0+n(m([\alpha]))\delta, (-m([\alpha]))_0+(n(m([\alpha]))-1)\delta]$ is infinite. Then $J'([\mu])$ properly contains $J'([\alpha])$, contradicting the maximality of $J'([\alpha])$.

(5) Clearly $K_0$ is contained in $L_0$. By (2) and (3) $L\backslash K$ is infinite. Since $\Phi_{W_0}$ is finite, there must be some $\lambda\in \Phi_{W_0}$ such that $\lambda^E\cap (L\backslash K)$ is infinite. Therefore the containment is strict.
\end{proof}

Write $J(\prec)=\{J_1, J_2,\cdots J_k\}$ such that $J_1\subset J_2 \subset \cdots \subset J_k$. We have a condensation of $\prec$: $\{J_2\backslash J_1, \cdots J_k\backslash J_{k-1}\}$ which plays a key role in our analysis.
The above lemma has already provided much information about the order type on each $J_i\backslash J_{i-1}$.

\subsection{Example}

In this subsection, we use a concrete example to illustrate our condensation of a reflection order on an irreducible affine Weyl group. For an element $w$ in a Coxeter group $W$, we denote the inversion set of $w^{-1}$
 (i.e. $\{\alpha\in \Phi_W^+\mid w^{-1}(\alpha)\in \Phi_W^-\}$) by $\Phi_w$. Assume that $W$ is infinite. An infinite reduced word of $W$ is a sequence $s_1s_2\cdots, s_i\in S$ (where $S$ is the set of simple reflections) such that for any $k, s_1s_2\cdots s_k$ is reduced. One can define the inversion set $\Phi_{s_1s_2\cdots}=\cup_{i=1}^{\infty}\Phi_{s_1s_2\cdots s_i}.$ Inversion sets are clearly biclosed in $\Phi^+_W$.
 An infinite reduced word with the property that $s_{i}=s_j$ if $i\equiv j(\mathrm{mod}\, k)$  will be written as $(s_1s_2\cdots s_k)^{\infty}$.

Let $W$ be of type $\widetilde{A}_3$. The simple roots of $\Phi_{W_0}$ will be denoted by $\alpha,\beta,\gamma$ with $(\alpha,\beta)=(\beta,\gamma)=-\frac{1}{2}$ and $(\alpha,\gamma)=0$.
Denote by $\Phi_{W_0}^+$ the standard positive system determined by the simple system $\{\alpha,\beta,\gamma\}.$
Then one can check that $((\Phi_{W_0}^+)_{\{\alpha,\beta\},\emptyset})^E$ is the inversion set $\Phi_{u_1}$ where $u_1=(s_{\gamma}s_{\beta}s_{\alpha}s_{\delta-\alpha-\beta-\gamma})^{\infty}$.
Let $u_{1,i}$ be the length $i$ prefix of $(s_{\gamma}s_{\beta}s_{\alpha}s_{\delta-\alpha-\beta-\gamma})^{\infty}$. Write $B_i=\Phi_{u_{1,i}}$.

Denote by $u_{2,i}$ the length $i$ prefix of the infinite reduced word $(s_{\beta}s_{\alpha}s_{\delta-\alpha-\beta})^{\infty}$ of the Coxeter system $(U, \{s_{\alpha},s_{\beta},s_{\delta-\alpha-\beta}\})$ where $U$ is the reflection subgroup generated by $\{s_{\alpha},s_{\beta},s_{\delta-\alpha-\beta}\}$.
Regard $u_{2,i}$ as an element in $W$. Write $C_i=((\Phi_{W_0}^+)_{\{\alpha,\beta\},\emptyset})^E\cup (\Phi_{u_{2,i}}\cap \{\pm\alpha,\pm\beta,\pm(\alpha+\beta)\}^E).$
One can check that $$\Phi_{(s_{\beta}s_{\alpha}s_{\delta-\alpha-\beta})^{\infty}}\cap \{\pm\alpha,\pm\beta,\pm(\alpha+\beta)\}^E=\{\beta,\beta+\alpha\}^E.$$
Therefore
$$((\Phi_{W_0}^+)_{\{\alpha\},\emptyset})^E=((\Phi_{W_0}^+)_{\{\alpha,\beta\},\emptyset})^E\cup (\Phi_{(s_{\beta}s_{\alpha}s_{\delta-\alpha-\beta})^{\infty}}\cap \{\pm\alpha,\pm\beta,\pm(\alpha+\beta)\}^E).$$

Denote by $u_{3,i}$ the length $i$ prefix of the infinite reduced word $(s_{\delta-\beta-\gamma}s_{\gamma}s_{\beta})^{\infty}$ of the Coxeter system $(U, \{s_{\gamma},s_{\beta},s_{\delta-\beta-\gamma}\})$ where $U$ is the reflection subgroup generated by $\{s_{\gamma},s_{\beta},s_{\delta-\beta-\gamma}\}$.
Denote by $D_i$ the set $$((\Phi_{W_0}^+)_{\emptyset,\{\beta,\gamma\}})^E\backslash (\Phi_{u_{3,i}}\cap \{\pm\beta,\pm(\beta+\gamma),\pm\gamma\}^E).$$
One can check that $$\Phi_{(s_{\delta-\beta-\gamma}s_{\gamma}s_{\beta})^{\infty}}\cap \{\pm\gamma,\pm\beta,\pm(\gamma+\beta)\}^E=\{-\beta,-\beta-\gamma\}^E.$$
Therefore
$$(\Phi_{W_0}^+)_{\emptyset,\{\gamma\}}^E=(\Phi_{W_0}^+)_{\emptyset,\{\beta,\gamma\}}^E\backslash (\Phi_{(s_{\delta-\beta-\gamma}s_{\gamma}s_{\beta})^{\infty}}\cap \{\pm\gamma,\pm\beta,\pm(\gamma+\beta)\}^E).$$

Denote by $u_{4,i}$ the length $i$ prefix of the infinite reduced word $(s_{\delta-\alpha-\beta-\gamma}s_{\gamma}s_{\beta}s_{\alpha})^{\infty}$.
Write $E_i=\Phi_{W_0}^E\backslash \Phi_{u_{4,i}}.$
One can check that $$\Phi_{(s_{\delta-\alpha-\beta-\gamma}s_{\gamma}s_{\beta}s_{\alpha})^{\infty}}\cap \Phi_{W_0}^E=\{-\alpha,-\alpha-\beta,-\alpha-\beta-\gamma\}^E.$$
Therefore
$$((\Phi_{W_0}^+)_{\emptyset,\{\beta,\gamma\}})^E=\Phi_{W_0}^E\backslash (\Phi_{(s_{\delta-\alpha-\beta-\gamma}s_{\gamma}s_{\beta}s_{\alpha})^{\infty}}\cap \Phi_{W_0}^E).$$

Consider the following (maximal) chain of biclosed sets in $\Phi_{W_0}^E(=\Phi^+_W):$

$$\emptyset\subset B_1\subset B_2\subset \cdots B_i\subset \cdots$$
$$\subset((\Phi_{W_0}^{+})_{\{\alpha,\beta\},\emptyset})^E\subset C_1\subset C_2\subset\cdots \subset C_j\subset\cdots$$
$$\subset ((\Phi_{W_0}^{+})_{\{\alpha\},\emptyset})^E\subset ((\Phi_{W_0}^{+})_{\{\alpha\},\emptyset})^E\cup \{\alpha\}\subset ((\Phi_{W_0}^{+})_{\{\alpha\},\emptyset})^E\cup \{\alpha,\alpha+\delta\}$$
$$\subset\cdots\subset(((\Phi_{W_0}^+)_{\{\alpha\},\{\gamma\}})^E\backslash{\{-\gamma+k\delta,\dots,-\gamma+3\delta,-\gamma+2\delta,-\gamma+\delta\}})\cup \{\alpha,\alpha+\delta\}$$
$$\subset\cdots\subset(((\Phi_{W_0}^+)_{\{\alpha\},\{\gamma\}})^E\backslash{\{-\gamma+3\delta,-\gamma+2\delta,-\gamma+\delta\}})\cup \{\alpha,\alpha+\delta\}$$
$$\subset(((\Phi_{W_0}^+)_{\{\alpha\},\{\gamma\}})^E\backslash{\{-\gamma+2\delta,-\gamma+\delta\}})\cup \{\alpha,\alpha+\delta\}$$
$$\subset(((\Phi_{W_0}^+)_{\{\alpha\},\{\gamma\}})^E\backslash{\{-\gamma+\delta\}})\cup \{\alpha,\alpha+\delta\}\subset(((\Phi_{W_0}^+)_{\{\alpha\},\{\gamma\}})^E\backslash{\{-\gamma+\delta\}})\cup \{\alpha,\alpha+\delta,\alpha+2\delta\}$$
$$\subset(((\Phi_{W_0}^+)_{\{\alpha\},\{\gamma\}})^E\backslash{\{-\gamma+\delta\}})\cup \{\alpha,\alpha+\delta,\alpha+2\delta,\alpha+3\delta\}\subset$$
$$\cdots\subset((\Phi_{W_0}^+)_{\{\alpha\},\{\gamma\}})^E\backslash{\{-\gamma+\delta\}}\cup \{\alpha,\alpha+\delta,\alpha+2\delta,\alpha+3\delta,\dots,\alpha+l\delta\}\subset$$
$$\cdots\subset ((\Phi_{W_0}^+)_{\emptyset,\{\gamma\}})^E\backslash{\{-\gamma+\delta\}}$$
$$\subset((\Phi_{W_0}^+)_{\emptyset,\{\gamma\}})^E\subset\cdots\subset D_m\subset\cdots \subset D_2\subset D_1\subset((\Phi_{W_0})^+_{\emptyset,\{\beta,\gamma\}})^E$$
$$\subset \cdots \subset E_l \subset\cdots \subset E_2\subset E_1\subset\Phi_{W_0}^E.$$

This  maximal chain of biclosed sets in $\Phi_W^+$ consists precisely of  all initial intervals of
a reflection order on $\Phi_W^+.$ Such a reflection order can be obtained as: $\alpha\prec \beta$ if and only if there exists a biclosed set $B$ in this maximal chain such that $\alpha\in B$ and $\beta\not\in B.$
The  reflection order on $\Phi_W^+$ determined by the above maximal chain of biclosed sets is

$$\gamma\prec \beta+\gamma\prec \alpha+\beta+\gamma\prec \gamma+\delta\prec \beta+\gamma+\delta\prec \alpha+\beta+\gamma+\delta\prec$$
$$\gamma+2\delta\prec \beta+\gamma+2\delta\prec \alpha+\beta+\gamma+2\delta\prec \cdots$$
$$\prec \gamma+i\delta\prec \beta+\gamma+i\delta\prec \alpha+\beta+\gamma+i\delta\prec\cdots$$
$$\prec\beta\prec \alpha+\beta\prec \beta+\delta\prec \alpha+\beta+\delta\prec \beta+2\delta\prec \alpha+\beta+2\delta\prec \cdots$$
$$\prec \beta+j\delta\prec \alpha+\beta+j\delta\prec \cdots\prec \alpha\prec \alpha+\delta$$
$$\prec \cdots \prec -\gamma+k\delta\prec \cdots \prec -\gamma+3\delta\prec -\gamma+2\delta\prec $$
$$\alpha+2\delta\prec \alpha+3\delta\prec \alpha+4\delta\prec \cdots\prec \alpha+l\delta\prec \cdots\prec -\gamma+\delta\prec$$
$$\cdots\prec-\beta+m\delta\prec -\beta-\gamma+m\delta\prec\cdots$$
$$\cdots\prec -\beta+3\delta\prec -\beta-\gamma+3\delta\prec-\beta+2\delta\prec -\beta-\gamma+2\delta\prec -\beta+\delta\prec -\beta-\gamma+\delta\prec$$
$$\cdots\prec-\alpha+p\delta\prec -\alpha-\beta+p\delta\prec -\alpha-\beta-\gamma+p\delta\cdots$$
$$\cdots \prec -\alpha+3\delta\prec -\alpha-\beta+3\delta\prec -\alpha-\beta-\gamma+3\delta \prec -\alpha+2\delta\prec -\alpha-\beta+2\delta$$
$$\prec -\alpha-\beta-\gamma+2\delta  \prec -\alpha+\delta\prec -\alpha-\beta+\delta\prec -\alpha-\beta-\gamma+\delta.$$
This reflection order has the order type $\omega+\omega+[2]+\omega^*+\omega+[1]+\omega^*+\omega^*.$

For this reflection order $\prec$, the positive system $\Psi^+_{W_0,\prec}$ is $\Phi^+_{W_0}=\{\alpha,\beta,\gamma,\alpha+\beta,\beta+\gamma,\alpha+\beta+\gamma\}$.
One sees that $$ J(\gamma)=J(\beta+\gamma)=J(\alpha+\beta+\gamma)=((\Phi_{W_0}^{+})_{\{\alpha,\beta\},\emptyset})^E,$$
$$J(\beta)=J(\alpha+\beta)=((\Phi_{W_0}^{+})_{\{\alpha\},\emptyset})^E, J(\alpha)=((\Phi_{W_0}^+)_{\emptyset,\{\gamma\}})^E\backslash{\{-\gamma+\delta\}}, $$ $$J'([\lambda])=((\Phi_{W_0}^+)_{\{\alpha\},\{\gamma\}})^E\backslash{\{-\gamma+\delta\}}\cup \{\alpha,\alpha+\delta\},$$
$$J'([\beta+\gamma])=((\Phi_{W_0})^+_{\emptyset,\{\beta,\gamma\}})^E,[\beta]=[\beta+\gamma],$$
$$J'([\alpha])=\Phi_{W_0}^E,[\alpha]=[\alpha+\beta]=[\alpha+\beta+\gamma].$$

\section{Main Result}\label{main}

In the previous section, we obtained a condensation $J(\prec)$ of the reflection order $\prec$. In this section we will show that the map
$\Gamma\mapsto \Gamma_0$ converts such a condensation into a chain of biclosed sets in $\Phi_{W_0}$ with a salient feature. We then associate a signature to such a chain of biclosed sets and show that those signatures are precisely trimmed Dyck words.

Let $\Gamma_1, \Gamma_2$ be two different biclosed sets in $\Phi_{W_0}$ and assume that $\Gamma_1\subset \Gamma_2$.
We call the pair $(\Gamma_1, \Gamma_2)$ \emph{admissible} if either  $\Gamma_2\backslash \Gamma_1\subset -\Gamma_1$ or $(\Gamma_2\backslash \Gamma_1)\cap -\Gamma_1=\emptyset.$
Denote the set of  biclosed sets in $\Phi_{W_0}$ by  $\mathcal{B}(\Phi_{W_0})$. Under inclusion they form a poset.
A chain $\emptyset=B_0\subsetneqq B_1\subsetneqq B_2 \subsetneqq \cdots \subsetneqq B_{k-1}\subsetneqq B_k=\Phi_{W_0}$ in the poset $(\mathcal{B}(\Phi_{W_0}),\subset)$ is called admissible if
 every pair $(B_{i-1},B_i), 1\leq i\leq k$ is admissible.

By Corollary 3.27 of \cite{DyerReflOrder}, a chain is admissible if and only if there exists a positive system $\Lambda^+$ in $\Phi_{W_0}$ with its simple system $\Delta$ such that

(1) $B_i=\Lambda^+_{\Delta_{i,1},\Delta_{i,2}}, 0\leq i\leq k$,

(2) $\Delta_{0,1}=\Delta, \Delta_{0,2}=\emptyset, \Delta_{k,1}=\emptyset, \Delta_{k,2}=\Delta$ and

(3) either $\Delta_{i-1,1}=\Delta_{i,1}$ or $\Delta_{i-1,2}=\Delta_{i,2}$.

Note that $\Delta_{i-1,1}\supseteqq\Delta_{i,1}$ and $\Delta_{i-1,2}\subseteqq\Delta_{i,2}$ (exactly one of the containment is strict).

Given an admissible chain $\{B_i\}_{i=0}^k$, we define its \emph{signature} to be a finite sequence $\{\epsilon_i\}_{i=0}^{k-1}$ where $\epsilon_i=1$ if   $B_{i+1}\backslash B_i\subset -B_i$ and $\epsilon=0$ if $(B_{i+1}\backslash B_i)\cap -B_i=\emptyset.$

Let $\prec$ be a reflection order of an affine Weyl group $W$. Write $J(\prec)_0:=\{K_0\mid K\in J(\prec)\}$.

\begin{lemma}\label{basiccorrespond}
\begin{enumerate}
\item Assume that $J,K\in J(\prec)$ and that $J$ is the (immediate) predecessor of $K$. Then the pair $(J_0,K_0)$ is admissible.
\item For any reflection order $\prec$ of $W,$ the set $J(\prec)_0$ ordered by inclusion is an  admissible chain in $(\mathcal{B}(\Phi_{W_0}),\subset)$.
Furthermore if the signature of this chain is $\{\epsilon_i\}_{i=0}^{k-1}$, the order type of $\prec$ is $\sum_{i=0}^{k-1}\lambda_i$ where $\lambda_i=\omega$ if $\epsilon_i=0$ and
$\lambda_i=[n_i]+\omega^*$  for some $n_i\geq 0$ if $\epsilon_i=1.$
\item Let $\{B_i\}_{i=0}^{k}$ be an admissible chain of biclosed sets in $\Phi_{W_0}$. Then there exists a reflection order $\prec$ such that $J(\prec)_0=\{B_i\}_{i=0}^{k}.$
\end{enumerate}
\end{lemma}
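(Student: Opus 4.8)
The plan is to prove the three parts in order, leaning on the structural results of Lemma \ref{layer} for parts (1)–(2) and on the characterization of admissible chains (Corollary 3.27 of \cite{DyerReflOrder}) plus an explicit gluing construction for part (3).

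For part (1): fix consecutive $J,K\in J(\prec)$. By Lemma \ref{layer}(5), $K_0\supsetneqq J_0$, so the pair is a genuine strict inclusion, and I must verify admissibility, i.e. that $K_0\setminus J_0$ is either contained in $-J_0$ or disjoint from it. The dichotomy comes from the two cases for the successor of $J$ in the definition of $J(\prec)$: either $K=J(\alpha)$ or $K=J'([\alpha])$. In the first case, Lemma \ref{layer}(2) says $K\setminus J$ has order type $\omega$, and by Lemma \ref{layer}(1) $K\setminus J$ cannot contain both $(\beta)_0+k\delta$ and $(-\beta)_0+l\delta$ for any $\beta$; combined with the fact that $(K\setminus J)\cap\widehat{\alpha}$ is infinite and $\alpha\in\Psi^+_{W_0,\prec}$, one reads off that every $\beta\in K_0\setminus J_0$ lies in $\Psi^+_{W_0,\prec}$-side, hence $K_0\setminus J_0$ is disjoint from $-J_0$ — here I will need to argue that $J_0$ is ``compatible'' with $\Psi^+_{W_0,\prec}$ in the sense dictated by the structure of initial intervals, using that $J_0$ is biclosed (as noted in the Background section). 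In the second case $K=J'([\alpha])$, Lemma \ref{layer}(3) gives order type $[n]+\omega^*$ on $K\setminus J$, and the roots $\gamma\in\Phi_{W_0}$ with $\widehat{\gamma}\cap(K\setminus J)$ infinite are exactly those with $-\gamma\in[\alpha]$, i.e. $\gamma\in\Psi^-_{W_0,\prec}$; again Lemma \ref{layer}(1) forbids mixing, so $K_0\setminus J_0\subset -J_0$. This matches the two admissibility alternatives.

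For part (2): by part (1), every consecutive pair in $J(\prec)_0$ is admissible, and Lemma \ref{first} together with Lemma \ref{layer}(4) pins down the endpoints — the minimal nonempty element is some $J(\alpha)$ with $J(\alpha)_0\ne\emptyset$, and one checks $\emptyset_0=\emptyset$ is the bottom while the top is $\Phi_W^+=J'([\alpha])$ for suitable $\alpha$, whose image under $K\mapsto K_0$ is $\Phi_{W_0}$. Strictness of the chain is Lemma \ref{layer}(5). This establishes that $J(\prec)_0$ is an admissible chain $\emptyset=C_0\subsetneqq C_1\subsetneqq\cdots\subsetneqq C_k=\Phi_{W_0}$. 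For the order-type statement, apply Lemma \ref{nanddualordertype}(3) to the condensation $J(\prec)$ of $(\Phi_W^+,\prec)$: writing $P_i$ for the successive differences, $(\Phi_W^+,\prec)\cong\sum P_i$. By Lemma \ref{layer}(2),(3) each $P_i$ has order type $\omega$ (when the $i$-th step of the chain adds a $J(\cdot)$, i.e. $\epsilon_i=0$) or $[n_i]+\omega^*$ (when it adds a $J'(\cdot)$, i.e. $\epsilon_i=1$); the identification of ``adds a $J(\cdot)$'' with $\epsilon_i=0$ follows from the case analysis in part (1), since $K_0\setminus J_0$ disjoint from $-J_0$ is exactly the $J(\alpha)$ case. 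Hence the order type is $\sum_{i=0}^{k-1}\lambda_i$ with $\lambda_i$ as stated.

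Part (3) is the converse and I expect it to be the main obstacle, since it requires \emph{building} a reflection order with a prescribed condensation rather than analyzing a given one. Given an admissible chain $\{B_i\}_{i=0}^k$, use Corollary 3.27 of \cite{DyerReflOrder} to fix a positive system $\Lambda^+$ with simple system $\Delta$ and data $\Delta_{i,1}\supseteqq\Delta_{i,1}',\ \Delta_{i,2}\subseteqq\Delta_{i,2}'$ realizing $B_i=\Lambda^+_{\Delta_{i,1},\Delta_{i,2}}$. I would construct $\prec$ layer by layer: on the $i$-th layer $\widehat{B_i}\setminus\widehat{B_{i-1}}$ (inside $\Phi_W^+$) put a total order of type $\omega$ if $\epsilon_{i-1}=0$ and of type $\omega^*$ if $\epsilon_{i-1}=1$, choosing within each layer an order that is itself a reflection order ``locally'' — concretely, for an $\epsilon=0$ layer one takes the roots $\widehat{\beta}$, $\beta$ ranging over the newly added simple-generated positive roots, interleaved by increasing $\delta$-coefficient (as in the $B_i$-to-$B_{i+1}$ steps in the worked Example), and for an $\epsilon=1$ layer one takes the corresponding negative roots with \emph{decreasing} $\delta$-coefficient, the tail being of type $\omega^*$. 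The key verification is that the concatenated order has all initial intervals biclosed in $\Phi_W^+$: intervals ending at a layer boundary are $\widehat{B_i}$, which is biclosed since the chain is admissible (using the description of biclosed sets in $\Phi_W^+$ from the Background); intervals internal to a layer are biclosed because each layer, being supported on a rank-$\le 2$ root subsystem worth of roots modulo the already-fixed part, reduces to the dihedral analysis of Lemma \ref{dihedral}. Finally I would check that the $J(\prec)$ built from this $\prec$ recovers exactly $\{B_i\}$: the $n(\alpha)$ and equivalence classes $[\alpha]$ are forced by how the layers were laid down, so $J(\alpha)$ and $J'([\alpha])$ coincide with the $\widehat{B_i}$, giving $J(\prec)_0=\{B_i\}_{i=0}^k$. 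The delicate point throughout is ensuring the local orders on successive layers splice into a global reflection order, i.e. that no bad triple $k_1\alpha+k_2\beta$ straddles a layer boundary in the wrong way — this is controlled precisely by the admissibility condition $\Delta_{i-1,1}=\Delta_{i,1}$ or $\Delta_{i-1,2}=\Delta_{i,2}$, which guarantees consecutive layers involve disjoint orthogonal parabolic directions.
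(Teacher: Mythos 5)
Your parts (1) and (2) are essentially workable but take a different route from the paper for (1): the paper argues by contradiction, writing $J_0=\Lambda^+_{\Delta_1,\Delta_2}$, $K_0=\Lambda^+_{\Delta_3,\Delta_4}$ with both $\Delta_3\subsetneqq\Delta_1$ and $\Delta_2\subsetneqq\Delta_4$, and derives a contradiction from Lemma \ref{layer} (either some $\beta$ with both $\widehat{\beta}$ and $\widehat{-\beta}$ meeting $K\setminus J$, or a copy of $\omega^*$ inside a set of type $\omega$), whereas you argue directly from the dichotomy $K=J(\alpha)$ versus $K=J'([\alpha])$. Your route can be completed: the ``compatibility'' step you defer is exactly that for $\beta\in K_0\setminus J_0$ in the $J(\alpha)$ case, Lemma \ref{layer}(1) forces $\widehat{-\beta}\cap K=\widehat{-\beta}\cap J$, and since $K$ is an initial interval and all of $\widehat{\beta}$ precedes all of $\widehat{-\beta}$ in the dihedral order on $\widehat{\{\pm\beta\}}$, this intersection must be empty; dually in the $J'([\alpha])$ case every $\gamma\in K_0\setminus J_0$ has $\widehat{-\gamma}\subset J$. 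As written, though, this key step is only gestured at. Your part (2) then coincides with the paper's argument.

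Part (3) contains a genuine gap. You justify biclosedness of the initial sections lying strictly inside a layer by claiming each layer ``reduces to the dihedral analysis of Lemma \ref{dihedral}''; this is false. A layer $\widehat{B_{i+1}}\setminus\widehat{B_i}$ is supported on a root subsystem whose rank can be anything up to $n$ (in the paper's worked example the final layer is ordered by the root sequence of $(s_{\delta-\alpha-\beta-\gamma}s_{\gamma}s_{\beta}s_{\alpha})^{\infty}$, an infinite reduced word of the full rank-$3$ affine group), so no dihedral reduction is available. The paper instead invokes Theorem 3.12 of \cite{afflattice} to identify each layer with $\Phi_w\cap\widehat{\Phi_{\Delta_{i,1}}}$ (resp.\ its complement in $\widehat{\Phi_{\Delta_{i+1,2}}}$) for an infinite reduced word $w$ of a suitable reflection subgroup, and orders the layer by the root sequence $\alpha_{r_1}\prec r_1(\alpha_{r_2})\prec\cdots$ of $w$ (or its reverse); biclosedness of all internal initial sections then comes for free because prefixes of reduced words have biclosed inversion sets. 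Without some such identification, your ``interleave by increasing $\delta$-coefficient'' order carries no proof that its internal initial sections are biclosed, and that verification is the whole content of (3).
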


\begin{proof}
(1) Assume to the contrary that $(J_0,K_0)$ is not admissible. Then by Corollary 3.27 of \cite{DyerReflOrder}, one can assume that
$J_0=\Lambda^+_{\Delta_1,\Delta_2}, K_0=\Lambda^+_{\Delta_3,\Delta_4}$ where $\Delta_2\subsetneqq \Delta_4$ and $\Delta_3\subsetneqq \Delta_1.$

Case I. One can find $\beta\in (\Delta_1\backslash \Delta_3)\cap (\Delta_4\backslash \Delta_2).$

Then $\beta^E\cap K$ is infinite and $(-\beta)^E\cap K$ is infinite.
$\beta^E\cap J$ is finite and $(-\beta)^E\cap J$ is finite. This is impossible by Lemma \ref{layer} (1).

Case II. The set $(\Delta_1\backslash \Delta_3)\cap (\Delta_4\backslash \Delta_2)$ is empty.

Take $\beta\in (\Delta_1\backslash \Delta_3)$ and $\gamma\in (\Delta_4\backslash \Delta_2).$
(Note that since $\beta\in \Delta_1$, one has $\beta\not\in \Delta_2$. Also since $\gamma\in \Delta_4,$ one has $\gamma\not\in \Delta_3$. So we conclude that $\beta\in \Delta\backslash (\Delta_3\cup\Delta_4), \gamma\in \Delta\backslash (\Delta_1\cup\Delta_2)$ where $\Delta$ is the simple system of $\Lambda^+.$)

This implies that under $\prec$ the elements in $\beta^E$ (resp. $\gamma^E$) appear before the elements in $(-\beta)^E$ (resp. $(-\gamma)^E$). Therefore $\beta,\gamma\in \Psi^+_{W_0,\prec}$.
Furthermore, one has $\beta^E\subset K$, $\gamma^E\subset J$, $(-\gamma)^E\cap K$ is infinite, $(-\gamma)^E\cap J$ is empty, $\beta^E\cap J$ is finite and
$(-\beta)^E\cap K$ is empty.
Therefore one has $J\subsetneqq J(\beta)\subset K$. This forces that $J(\beta)=K$.
By Lemma \ref{layer} (2), restricted on $K\backslash J$, $\prec$ has the order type of $\omega.$ However $(K\backslash J)\cap (-\gamma)^E=\{(-\gamma)_0+t\delta|\,t\geq k\}$ for some $k\geq 0.$ The restriction of $\prec$ on $(K\backslash J)\cap (-\gamma)^E$ should have the order type $\omega^*.$ But this is a subset of $K\backslash J$. A contradiction.

(2) The first assertion follows from (1). Suppose that $J,K\subset J(\prec)$ with $J_0=\Lambda^+_{\Delta_1,\Delta_2}, K_0=\Lambda^+_{\Delta_3,\Delta_2}, \Delta_3\subset \Delta_1.$ Take $\beta\in \Delta_1\backslash \Delta_3.$ Similar to the reasoning in the proof of (1), one deduces that $\beta\in \Psi^+_{W_0,\prec}$ and $J(\beta)=K$. Then by Lemma \ref{layer} (2), restricted on $K\backslash J$, $\prec$ has the order type of $\omega.$  Suppose that $J,K\subset J(\prec)$ with $J_0=\Lambda^+_{\Delta_1,\Delta_2}, K_0=\Lambda^+_{\Delta_1,\Delta_3}, \Delta_3\supset \Delta_2.$ Take $\gamma\in \Delta_3\backslash \Delta_2.$ Then $\gamma\in \Psi^+_{W_0,\prec}$, $\gamma^E\subset J$ and $(-\gamma)^E\cap K$ is infinite. Therefore the order type of $\prec$ restricted on $(-\gamma)^E\cap (K\backslash J)$ is $\omega^*$. Then by Lemma \ref{layer} (2) and (3), the order type of $\prec$ restricted on $K\backslash J$ has to be $[n_i]+\omega^*$ for some $n_i\geq 0$.

(3) Assume that $B_i=\Lambda^+_{\Delta_{i,1},\Delta_{i,2}}, 0\leq i\leq k$. We define a total order on the set $D_i:=B_{i+1}^E\backslash B_{i}^E$.

Case I. $\Delta_{i,2}=\Delta_{i+1,2}$.

It follows from Theorem 3.12 of \cite{afflattice} that $D_i=\Phi_w\cap \Phi_{\Delta_{i,1}}^E$ where $w$ is an infinite reduced word of the reflection subgroup $\widetilde{W_{\Delta_{i,1}}}$ with Coxeter generators $\widetilde{\Theta}$ where $\widetilde{\Theta}$ is the union of the simple system of the positive system $\Phi_{\Delta_{i,1}}\cap \Phi^+_{W_0}$ and $\{\delta-\rho\mid\rho$ is the highest root of an irreducible component of $\Phi_{\Delta_{i,1}}\}$.

Let $r_1r_2\cdots$ be a reduced expression of $w$ (which means $r_i\in \widetilde{\Theta}$). Define a total order $\prec_i$ on $D_i$ by
$$\alpha_{r_1}\prec_i r_1(\alpha_{r_2})\prec_i \cdots \prec_i r_1r_2\cdots r_i(\alpha_{r_{i+1}})\prec_i \cdots.$$

Case II. $\Delta_{i,1}=\Delta_{i+1,1}$.

Again it follows from Theorem 3.12 of \cite{afflattice} that $D_i=\Phi_{\Delta_{i+1,2}}^E\backslash(\Phi_w\cap \Phi_{\Delta_{i+1,2}}^E)$ where $w$ is an infinite reduced word of the reflection subgroup $\widetilde{W_{\Delta_{i+1,2}}}$ with Coxeter generators $\widetilde{\Theta}$ where $\widetilde{\Theta}$ is the union of the simple system of the positive system $\Phi_{\Delta_{i+1,2}}\cap \Phi^+_{W_0}$ and $\{\delta-\rho\mid\rho$ is the highest root of an irreducible component of $\Phi_{\Delta_{i+1,2}}\}$.

Also let $r_1r_2\cdots$ be a reduced expression of $w$ (which means $r_i\in \widetilde{\Theta}$). Define a total order on $D_i$ by
$$\cdots \prec_i r_1r_2\cdots r_i(\alpha_{r_{i+1}})\prec_i \cdots \prec_i r_1(\alpha_{r_2})\prec_i \alpha_{r_1}.$$

The total order $\prec:=\sum_{i=0}^{k-1}(D_i,\prec_i)$ is a reflection order since each of its initial section is a biclosed set of $\Phi_{W_0}^E$. Such a reflection order $\prec$ satisfies the property that
$J(\prec)_0=\{B_i\}_{i=0}^{k}.$
\end{proof}

Now we investigate the admissible chains of biclosed sets in the finite irreducible crystallographic root systems.

An \emph{extended Dyck word} of length $2n$ is a string of $n$ letters $0$ and $n$ letters $1$ such that in any proper initial segment, the number of the letters $1$ is strictly less than that of the letters $0$. For example, for $n=3$, the extended Dyck words are
$$000111,001011.$$ It is well-known that extended Dyck words of length $2n$  are enumerated by the $(n-1)$-th Catalan number.

Let $w$ be an extended Dyck word of length $2n$. We define a $2n$-\emph{trimmed Dyck word}  to be the string obtained by replacing some (contiguous) substrings of $w$ of the form $\underbrace{\epsilon\epsilon\cdots\epsilon}_{k\,\mathrm{times}}, 1<k$ ($\epsilon\in \{0,1\}$) with $\underbrace{\epsilon\epsilon\cdots\epsilon}_{k'\,\mathrm{times}}, 1<k'<k$. For example, for $n=3$, the $6$-trimmed Dyck words are
$$0111, 00111, 000111, 011, 0011, 00011, 01, 001, 0001, 001011, 01011, 00101,0101.$$

\begin{lemma}\label{wordsandchain}
\begin{enumerate}
\item The set of the signatures of maximal admissible chains of biclosed sets in $\Phi_{W_0}$ is precisely the set of extended Dyck words of length $2n.$
\item The set of the signatures of admissible chains of biclosed sets in $\Phi_{W_0}$ is precisely the set of $2n$-trimmed Dyck words.
\end{enumerate}
\end{lemma}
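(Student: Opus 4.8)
The plan is to prove both parts by tracking how the two "active" simple subsets $\Delta_{i,1}$ (shrinking) and $\Delta_{i,2}$ (growing) evolve along an admissible chain, using the structural description (1)--(3) that follows Corollary 3.27 of \cite{DyerReflOrder}. The key quantitative bookkeeping is this: for a chain $\{B_i\}_{i=0}^k$ realized as $B_i=\Lambda^+_{\Delta_{i,1},\Delta_{i,2}}$, at each step exactly one of $|\Delta_{i,1}|$ drops or $|\Delta_{i,2}|$ rises, and a step has signature $\epsilon_i=0$ precisely when $\Delta_{i,1}\supsetneq\Delta_{i+1,1}$ (i.e.\ $\Delta_{i,2}=\Delta_{i+1,2}$) and $\epsilon_i=1$ precisely when $\Delta_{i,2}\subsetneq\Delta_{i+1,2}$. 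Since $\Delta_{0,1}=\Delta$, $\Delta_{0,2}=\emptyset$, $\Delta_{k,1}=\emptyset$, $\Delta_{k,2}=\Delta$ with $|\Delta|=n$, over the whole chain $|\Delta_{i,1}|$ must decrease by a total of $n$ and $|\Delta_{i,2}|$ must increase by a total of $n$; but these decrements/increments need not be in unit steps.

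\emph{Proof of (1).} For a \emph{maximal} admissible chain, every step must change one side by exactly one simple root: if some step removed two or more roots from $\Delta_{i,1}$ at once (or added two or more to $\Delta_{i,2}$), one could refine it, since removing the simple roots one at a time still yields biclosed sets $\Lambda^+_{\Delta',\Delta_{i,2}}$ for every intermediate $\Delta'$ (these are exactly the biclosed sets of the form described, and admissibility of each refined step is immediate because the "type" of the step, $0$ or $1$, is preserved). Conversely an admissible chain all of whose steps are unit steps cannot be refined, since there is no biclosed set strictly between $\Lambda^+_{\Delta',\Delta_{i,2}}$ and $\Lambda^+_{\Delta'\setminus\{\beta\},\Delta_{i,2}}$. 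Hence maximal admissible chains have length exactly $2n$, and their signatures are length-$2n$ strings with $n$ zeros (the $n$ unit removals from $\Delta_{i,1}$) and $n$ ones (the $n$ unit additions to $\Delta_{i,2}$). The Dyck condition "in every proper initial segment the number of $1$'s is strictly less than the number of $0$'s" is exactly the constraint that at every proper stage one must already have removed strictly more simple roots than one has added: removing a simple root $\beta$ from $\Delta_{i,1}$ is allowed only once $\beta$ is no longer simple-and-orthogonal-to-the-added-part, while adding $\gamma$ to $\Delta_{i,2}$ requires $\gamma$ orthogonal to the not-yet-removed part $\Delta_{i,1}$; concretely $\Delta_{i,1}$ and $\Delta_{i,2}$ must stay disjoint orthogonal subsets of $\Delta$ with $|\Delta_{i,1}|+|\Delta_{i,2}|\le n$ and, at intermediate stages, $|\Delta_{i,1}|+|\Delta_{i,2}|\le n-1$ strictly (equality $n$ occurs only at the start, with $\Delta_{i,1}=\Delta$, and effectively not before the end because once you add the last $\gamma$ you must have removed everything else). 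Translating $|\Delta_{i,1}|=n-(\#0\text{'s so far})$ and $|\Delta_{i,2}|=\#1\text{'s so far}$ turns the constraint $|\Delta_{i,1}|+|\Delta_{i,2}|\le n-1$ at every proper stage into $\#1\text{'s}<\#0\text{'s}$, which is the extended Dyck condition. One must also check the converse: every extended Dyck word is realized by some maximal admissible chain in some $\Phi_{W_0}$ of rank $n$ — take $\Phi_{W_0}$ of type $A_n$ (say), so that $\Delta=\{\alpha_1,\dots,\alpha_n\}$ is a path; a Dyck word prescribes at each $0$-step which simple root to drop and at each $1$-step which to adjoin, and one checks that the orthogonality/disjointness conditions can always be met by choosing the root to drop from one end of the remaining path and the root to adjoin adjacent to the already-adjoined part — here the strict inequality in the Dyck condition is exactly what guarantees there is always a legal choice available.

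\emph{Proof of (2).} An arbitrary admissible chain is obtained from a maximal one by \emph{coarsening}: by the refinement argument above, any admissible chain $\{B_i\}$ can be completed to a maximal admissible chain $\{\widetilde{B_j}\}$ by inserting the intermediate biclosed sets of each multi-step, and the signature of $\{B_i\}$ is obtained from the signature $\widetilde{w}$ of $\{\widetilde{B_j}\}$ by replacing, for each original step, the block $\epsilon\epsilon\cdots\epsilon$ of repeated letters coming from that multi-removal (all $0$'s) or multi-addition (all $1$'s) by the single letter $\epsilon$ — which is exactly the definition of a $2n$-trimmed Dyck word. Conversely, given a $2n$-trimmed Dyck word, lift it to an extended Dyck word $\widetilde{w}$, realize $\widetilde{w}$ by a maximal admissible chain via part (1), and coarsen by the prescribed blocking; the coarsened chain is still admissible because deleting intermediate sets from an admissible chain preserves admissibility of the surviving steps (the surviving step $(B_i,B_{i+1})$ still has $\Delta_{i,2}=\Delta_{i+1,2}$ or $\Delta_{i,1}=\Delta_{i+1,1}$, so its type is unambiguous, and $\Gamma_2\setminus\Gamma_1\subset -\Gamma_1$ resp.\ $(\Gamma_2\setminus\Gamma_1)\cap-\Gamma_1=\emptyset$ follows from the corresponding property of the constituent unit steps). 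This gives both inclusions.

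The main obstacle I expect is the "converse" direction of part (1): showing that \emph{every} extended Dyck word actually arises as the signature of a maximal admissible chain in some rank-$n$ irreducible $\Phi_{W_0}$. This requires exhibiting, for each $0$ (remove a simple root from $\Delta_{i,1}$) and each $1$ (adjoin a simple root to $\Delta_{i,2}$), a legal choice respecting the orthogonality constraint built into the definition of $\Lambda^+_{\Pi_1,\Pi_2}$ (that $\Pi_1,\Pi_2$ be orthogonal subsets of $\Delta$); the cleanest route is to fix type $A_n$ and give an explicit greedy construction, peeling simple roots off one end of the Dynkin path for the $0$-moves and building the $\Delta_{i,2}$-part inward from a designated simple root for the $1$-moves, and then verify by induction that the strict Dyck inequality keeps at least one end of the "remaining path" available at every $0$-step and keeps a root orthogonal to $\Delta_{i,1}$ available at every $1$-step. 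The analogous realization for the trimmed words in (2) is then immediate from (1) by the coarsening argument, so no separate difficulty arises there.
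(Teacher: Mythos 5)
Your overall architecture matches the paper's: reduce everything to the evolution of the pair $(\Delta_{i,1},\Delta_{i,2})$ via Corollary 3.27 of \cite{DyerReflOrder}, get the Dyck inequality for the necessity direction from the fact that $\Delta_{i,1}$ and $\Delta_{i,2}$ are disjoint orthogonal subsets of $\Delta$ (so they cannot exhaust $\Delta$ at an intermediate stage), prove sufficiency by an explicit greedy construction, and deduce (2) from (1) by refining/coarsening constant blocks. The trimming argument for (2) and the counting argument for the Dyck condition are essentially the paper's (though you should make explicit that ruling out $\Delta_{i,1}\uplus\Delta_{i,2}=\Delta$ at an intermediate stage uses irreducibility, i.e.\ connectedness of the Dynkin diagram; your parenthetical ``effectively not before the end'' is circular as written).

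The genuine gap is in the converse direction of (1). You propose to realize every extended Dyck word ``in \emph{some} $\Phi_{W_0}$ of rank $n$ --- take $\Phi_{W_0}$ of type $A_n$.'' But the lemma is about the fixed root system $\Phi_{W_0}$ attached to the given irreducible affine Weyl group $W$, and the main theorem it feeds is stated for an arbitrary irreducible affine $W$ of rank $n$; so the realization must be carried out in \emph{every} irreducible crystallographic root system of rank $n$, not a convenient one. Your greedy ``peel one end of the path, adjoin at the other end'' scheme is fine for linear Dynkin diagrams (types $A_n$, $B_n$, $C_n$, $F_4$, $G_2$, which is what the paper does there), but it does not go through verbatim for $D_n$ and $E_n$: the branch node means the complement of an ``initial segment'' need not remain a set to which you can legally adjoin one orthogonal simple root at each $1$-step, and the bulk of the paper's proof is precisely the case-by-case bookkeeping at the trivalent vertex for $D_n$ and $E_6,E_7,E_8$. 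Your closing remark that ``no separate difficulty arises'' is therefore not justified; as written the proposal proves the realization statement only for type $A_n$.
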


\begin{proof}
(2) follows immediately from (1). We prove (1).
By Corollary 3.27 of \cite{DyerReflOrder}, a maximal admissible chain of biclosed sets in $\Phi$ takes the form
$\{\Lambda^+_{\Delta_{i,1},\Delta_{i,2}}\}_{i=0}^{2n}$ such that either
 $$\Delta_{i,1}\supset \Delta_{i+1,1}, \Delta_{i,2}=\Delta_{i+1,2},|\Delta_{i,1}|-1=|\Delta_{i+1,1}|$$ or
  $$\Delta_{i,2}\subset \Delta_{i+1,2}, \Delta_{i,1}=\Delta_{i+1,1},|\Delta_{i,2}|+1=|\Delta_{i+1,2}|.$$
(Note that $\Delta_{0,1}=\Delta_{2n,2}=\Delta$ and $\Delta_{0,2}=\Delta_{2n,1}=\emptyset.$)

We prove   that for $1\leq i<2n$, in any $i$-th initial segment of the signature of this maximal chain, the number of $0$ is strictly greater than that of $1$.
Suppose to the contrary that in the $i$-th initial segment of the signature of this maximal chain, the number of $0$ is equal to that of $1$.
This implies that $n-|\Delta_{i,1}|=|\Delta_{i,2}|.$ Then $\Delta_{i,1}\uplus \Delta_{i,2}=\Delta$. This contradicts the fact that
$(\Delta_{i,1},\Delta_{i,2})=0$. Suppose to the contrary that in the $i$-th initial segment of the signature of this maximal chain, the number of $0$ is less than that of $1$.
This implies that $n-|\Delta_{i,1}|<|\Delta_{i,2}|,$ which is impossible.

Now we show inductively that given any extended Dyck word of length $2n$, there exists a maximal admissible chain of biclosed sets whose signature equals to it.
First let $W_0$ be of type $A_n, B_n, C_n, F_4$ and $G_2$. Suppose that $\Lambda^+=\Phi_{W_0}^+,$ the standard positive system. The simple roots $\alpha_1,\alpha_2,\dots,\alpha_n$ are numbered such that $(\alpha_i,\alpha_{i+1})\leq0, 1\leq i\leq n-1$ and $(\alpha_i,\alpha_j)=0$ for $|i-j|\geq 2.$

Note that any extended Dyck word starts with $00$. Set $\Delta_{0,1}=\Delta$, $\Delta_{0,2}=\emptyset$, $\Delta_{1,1}=\Delta\backslash \{\alpha_n\}$ and $\Delta_{1,2}=\emptyset$. Assume that $\Delta_{i,1}=\{\alpha_1,\alpha_2,\dots,\alpha_j\}$ and $\Delta_{i,2}=\{\alpha_l,\alpha_{l+1},\dots,\alpha_n\}$ with $l\geq j+2.$
If the $(i+1)$-th number in this extended Dyck word is $0$, we set $\Delta_{i+1,1}=\{\alpha_1,\alpha_2,\dots,\alpha_{j-1}\}$ and $\Delta_{i+1,2}=\{\alpha_l,\alpha_{l+1},\dots,\alpha_n\}$. Note that if $l=j+2$, in the initial segment of length $i$ of this extended Dyck word, there are $n-j$ $0$s and $n-l+1=n-j-1$ $1$s. Hence the $(i+1)$-th number in this extended Dyck word cannot be $1$. So assume that  $l>j+2$ and the  $(i+1)$-th number in this extended Dyck word is $1$.
Then we set $\Delta_{i+1,1}=\{\alpha_1,\alpha_2,\dots,\alpha_{j}\}$ and $\Delta_{i+1,2}=\{\alpha_{l-1},\alpha_{l},\dots,\alpha_n\}$.
One readily sees that the maximal chain of biclosed sets constructed in this manner has the desired signature.

\tikzset{every picture/.style={line width=0.75pt}} 

\begin{tikzpicture}[x=0.75pt,y=0.75pt,yscale=-.6,xscale=.6]

\draw   (138.8,109.6) .. controls (138.8,106.51) and (141.31,104) .. (144.4,104) .. controls (147.49,104) and (150,106.51) .. (150,109.6) .. controls (150,112.69) and (147.49,115.2) .. (144.4,115.2) .. controls (141.31,115.2) and (138.8,112.69) .. (138.8,109.6) -- cycle ;
\draw    (150,109.6) -- (210.3,109.6) ;
\draw   (209.8,109.6) .. controls (209.8,106.51) and (212.31,104) .. (215.4,104) .. controls (218.49,104) and (221,106.51) .. (221,109.6) .. controls (221,112.69) and (218.49,115.2) .. (215.4,115.2) .. controls (212.31,115.2) and (209.8,112.69) .. (209.8,109.6) -- cycle ;
\draw    (221,109.6) -- (281.3,109.6) ;
\draw   (280.8,109.6) .. controls (280.8,106.51) and (283.31,104) .. (286.4,104) .. controls (289.49,104) and (292,106.51) .. (292,109.6) .. controls (292,112.69) and (289.49,115.2) .. (286.4,115.2) .. controls (283.31,115.2) and (280.8,112.69) .. (280.8,109.6) -- cycle ;
\draw    (292,109.6) -- (352.3,109.6) ;
\draw    (439,110.6) -- (499.3,110.6) ;
\draw   (499.3,110.6) .. controls (499.3,107.51) and (501.81,105) .. (504.9,105) .. controls (507.99,105) and (510.5,107.51) .. (510.5,110.6) .. controls (510.5,113.69) and (507.99,116.2) .. (504.9,116.2) .. controls (501.81,116.2) and (499.3,113.69) .. (499.3,110.6) -- cycle ;
\draw    (510.5,107.6) -- (563.3,72.2) ;
\draw   (563.3,71.2) .. controls (563.3,68.11) and (565.81,65.6) .. (568.9,65.6) .. controls (571.99,65.6) and (574.5,68.11) .. (574.5,71.2) .. controls (574.5,74.29) and (571.99,76.8) .. (568.9,76.8) .. controls (565.81,76.8) and (563.3,74.29) .. (563.3,71.2) -- cycle ;
\draw    (510.5,110.6) -- (566.3,145.2) ;
\draw   (565.3,145.2) .. controls (565.3,142.11) and (567.81,139.6) .. (570.9,139.6) .. controls (573.99,139.6) and (576.5,142.11) .. (576.5,145.2) .. controls (576.5,148.29) and (573.99,150.8) .. (570.9,150.8) .. controls (567.81,150.8) and (565.3,148.29) .. (565.3,145.2) -- cycle ;

\draw (376,101) node [anchor=north west][inner sep=0.75pt]   [align=left] {$\displaystyle \cdots $};
\draw (138,122) node [anchor=north west][inner sep=0.75pt]   [align=left] {$1$};
\draw (210,123) node [anchor=north west][inner sep=0.75pt]   [align=left] {$2$};
\draw (283,120) node [anchor=north west][inner sep=0.75pt]   [align=left] {$3$};
\draw (483,122) node [anchor=north west][inner sep=0.75pt]   [align=left] {$n-2$};
\draw (576.8,63.6) node [anchor=north west][inner sep=0.75pt]   [align=left] {$n-1$};
\draw (578.5,148.2) node [anchor=north west][inner sep=0.75pt]   [align=left] {$n$};
\end{tikzpicture}

Now let  $W_0$ be of type $D_n$ and let the numbering of the simple roots of the standard positive system come from  the above Dynkin diagram.
Again note that any extended Dyck word starts with $00$. We set $\Lambda^+=\Phi^+_{W_0}$, the standard positive system, $\Delta_{0,1}=\Delta$, $\Delta_{0,2}=\emptyset$, $\Delta_{1,1}=\Delta\backslash \{\alpha_1\}$ and $\Delta_{1,2}=\emptyset$.

Assume that $\Delta_{i,1}=\{\alpha_{n},\alpha_{n-1},\cdots,\alpha_j\}$ and $\Delta_{i,2}=\{\alpha_1,\alpha_{2},\dots,\alpha_l\}$ with $j\leq n-1$ and $l\leq j-2.$ If the $(i+1)$-th number in this extended Dyck word is $0$, we set $\Delta_{i+1,1}=\{\alpha_n,\alpha_{n+1},\dots,\alpha_{j+1}\}$ and $\Delta_{i+1,2}=\{\alpha_1,\alpha_{2},\dots,\alpha_l\}$. If $l=j-2$, then in the initial segment of length $i$ of this extended Dyck word, there are $j-1$ $0$s and $l=j-2$ $1$s. Hence the $(i+1)$-th number in this extended Dyck word cannot be $1$.
So assume that  $l<j-2$ and $j\leq n-1$ and that the  $(i+1)$-th number in this extended Dyck word is $1$.
Then we set $\Delta_{i+1,1}=\{\alpha_{n},\alpha_{n-1},\dots,\alpha_j\}$ and $\Delta_{i+1,2}=\{\alpha_1,\alpha_{2},\dots,\alpha_{l+1}\}$.

Now assume that $\Delta_{i,1}=\{\alpha_{n}\}$ and $\Delta_{i,2}=\{\alpha_1,\alpha_{2},\cdots,\alpha_l\}, l\leq n-3.$  If the $(i+1)$-th number in this extended Dyck word is $0$, we set $\Delta_{i+1,1}=\emptyset$ and $\Delta_{i+1,2}=\{\alpha_1,\alpha_{2},\dots,\alpha_l\}$. Suppose that the $(i+1)$-th number in this extended Dyck word is $1$. In case $l\leq n-4$, set $\Delta_{i+1,1}=\{\alpha_{n}\}$ and $\Delta_{i+1,2}=\{\alpha_1,\alpha_{2},\dots,\alpha_{l},\alpha_{l+1}\}$. In case $l=n-3$, set $\Delta_{i+1,1}=\{\alpha_{n}\}$ and $\Delta_{i+1,2}=\{\alpha_1,\alpha_{2},\dots,\alpha_{n-3},\alpha_{n-1}\}$.

Now assume that $\Delta_{i,1}=\{\alpha_{n}\}$ and $\Delta_{i,2}=\{\alpha_1,\alpha_{2},\dots,\alpha_{n-3},\alpha_{n-1}\}$. Then in the initial segment of length $i$ of this extended Dyck word, there are $n-1$ 0s and $n-2$ 1s. Hence the $(i+1)$-th number can only be $0$. Set $\Delta_{i+1,1}=\emptyset$ and $\Delta_{i,2}=\{\alpha_1,\alpha_{2},\dots,\alpha_{n-3},\alpha_{n-1}\}$.

Finally assume that $\Delta_{i,1}=\emptyset$. Then in the initial segment of length $i$ of this extended Dyck word there are $n$ 0s and the $(i+1)$-th number in this word can only be $1$. Then set $\Delta_{i+1,2}=\Delta_{i,2}\cup \{\alpha_p\}$ where $\alpha_p\in \Delta\backslash \Delta_{i,2}.$
Then the maximal chain of biclosed sets constructed in the above way has the desired signature.

Type $E_6$

\tikzset{every picture/.style={line width=0.75pt}} 

\begin{tikzpicture}[x=0.75pt,y=0.75pt,yscale=-.6,xscale=.6]

\draw   (138.8,109.6) .. controls (138.8,106.51) and (141.31,104) .. (144.4,104) .. controls (147.49,104) and (150,106.51) .. (150,109.6) .. controls (150,112.69) and (147.49,115.2) .. (144.4,115.2) .. controls (141.31,115.2) and (138.8,112.69) .. (138.8,109.6) -- cycle ;
\draw    (150,109.6) -- (210.3,109.6) ;
\draw   (209.8,109.6) .. controls (209.8,106.51) and (212.31,104) .. (215.4,104) .. controls (218.49,104) and (221,106.51) .. (221,109.6) .. controls (221,112.69) and (218.49,115.2) .. (215.4,115.2) .. controls (212.31,115.2) and (209.8,112.69) .. (209.8,109.6) -- cycle ;
\draw    (221,109.6) -- (281.3,109.6) ;
\draw   (280.8,109.6) .. controls (280.8,106.51) and (283.31,104) .. (286.4,104) .. controls (289.49,104) and (292,106.51) .. (292,109.6) .. controls (292,112.69) and (289.49,115.2) .. (286.4,115.2) .. controls (283.31,115.2) and (280.8,112.69) .. (280.8,109.6) -- cycle ;
\draw    (292,109.6) -- (352.3,109.6) ;
\draw    (285.3,54.2) -- (286.4,104) ;
\draw   (422.3,109.6) .. controls (422.3,106.51) and (424.81,104) .. (427.9,104) .. controls (430.99,104) and (433.5,106.51) .. (433.5,109.6) .. controls (433.5,112.69) and (430.99,115.2) .. (427.9,115.2) .. controls (424.81,115.2) and (422.3,112.69) .. (422.3,109.6) -- cycle ;
\draw   (351.8,109.6) .. controls (351.8,106.51) and (354.31,104) .. (357.4,104) .. controls (360.49,104) and (363,106.51) .. (363,109.6) .. controls (363,112.69) and (360.49,115.2) .. (357.4,115.2) .. controls (354.31,115.2) and (351.8,112.69) .. (351.8,109.6) -- cycle ;
\draw    (362,109.6) -- (422.3,109.6) ;
\draw   (279.3,49.6) .. controls (279.3,46.51) and (281.81,44) .. (284.9,44) .. controls (287.99,44) and (290.5,46.51) .. (290.5,49.6) .. controls (290.5,52.69) and (287.99,55.2) .. (284.9,55.2) .. controls (281.81,55.2) and (279.3,52.69) .. (279.3,49.6) -- cycle ;

\draw (138,122) node [anchor=north west][inner sep=0.75pt]   [align=left] {$1$};
\draw (295,42) node [anchor=north west][inner sep=0.75pt]   [align=left] {$2$};
\draw (209,122) node [anchor=north west][inner sep=0.75pt]   [align=left] {$3$};
\draw (282,120) node [anchor=north west][inner sep=0.75pt]   [align=left] {$4$};
\draw (351.4,122) node [anchor=north west][inner sep=0.75pt]   [align=left] {$5$};
\draw (422,121) node [anchor=north west][inner sep=0.75pt]   [align=left] {$6$};

\end{tikzpicture}

Type $E_7$

\tikzset{every picture/.style={line width=0.75pt}} 

\begin{tikzpicture}[x=0.75pt,y=0.75pt,yscale=-.6,xscale=.6]

\draw   (138.8,109.6) .. controls (138.8,106.51) and (141.31,104) .. (144.4,104) .. controls (147.49,104) and (150,106.51) .. (150,109.6) .. controls (150,112.69) and (147.49,115.2) .. (144.4,115.2) .. controls (141.31,115.2) and (138.8,112.69) .. (138.8,109.6) -- cycle ;
\draw    (150,109.6) -- (210.3,109.6) ;
\draw   (209.8,109.6) .. controls (209.8,106.51) and (212.31,104) .. (215.4,104) .. controls (218.49,104) and (221,106.51) .. (221,109.6) .. controls (221,112.69) and (218.49,115.2) .. (215.4,115.2) .. controls (212.31,115.2) and (209.8,112.69) .. (209.8,109.6) -- cycle ;
\draw    (221,109.6) -- (281.3,109.6) ;
\draw   (280.8,109.6) .. controls (280.8,106.51) and (283.31,104) .. (286.4,104) .. controls (289.49,104) and (292,106.51) .. (292,109.6) .. controls (292,112.69) and (289.49,115.2) .. (286.4,115.2) .. controls (283.31,115.2) and (280.8,112.69) .. (280.8,109.6) -- cycle ;
\draw    (292,109.6) -- (352.3,109.6) ;
\draw    (285.3,54.2) -- (286.4,104) ;
\draw   (422.3,109.6) .. controls (422.3,106.51) and (424.81,104) .. (427.9,104) .. controls (430.99,104) and (433.5,106.51) .. (433.5,109.6) .. controls (433.5,112.69) and (430.99,115.2) .. (427.9,115.2) .. controls (424.81,115.2) and (422.3,112.69) .. (422.3,109.6) -- cycle ;
\draw   (351.8,109.6) .. controls (351.8,106.51) and (354.31,104) .. (357.4,104) .. controls (360.49,104) and (363,106.51) .. (363,109.6) .. controls (363,112.69) and (360.49,115.2) .. (357.4,115.2) .. controls (354.31,115.2) and (351.8,112.69) .. (351.8,109.6) -- cycle ;
\draw    (362,109.6) -- (422.3,109.6) ;
\draw   (279.3,49.6) .. controls (279.3,46.51) and (281.81,44) .. (284.9,44) .. controls (287.99,44) and (290.5,46.51) .. (290.5,49.6) .. controls (290.5,52.69) and (287.99,55.2) .. (284.9,55.2) .. controls (281.81,55.2) and (279.3,52.69) .. (279.3,49.6) -- cycle ;
\draw   (493.3,108.6) .. controls (493.3,105.51) and (495.81,103) .. (498.9,103) .. controls (501.99,103) and (504.5,105.51) .. (504.5,108.6) .. controls (504.5,111.69) and (501.99,114.2) .. (498.9,114.2) .. controls (495.81,114.2) and (493.3,111.69) .. (493.3,108.6) -- cycle ;
\draw    (433,108.6) -- (493.3,108.6) ;

\draw (138,122) node [anchor=north west][inner sep=0.75pt]   [align=left] {$1$};
\draw (295,42) node [anchor=north west][inner sep=0.75pt]   [align=left] {$2$};
\draw (209,122) node [anchor=north west][inner sep=0.75pt]   [align=left] {$3$};
\draw (282,120) node [anchor=north west][inner sep=0.75pt]   [align=left] {$4$};
\draw (351.4,122) node [anchor=north west][inner sep=0.75pt]   [align=left] {$5$};
\draw (422,122) node [anchor=north west][inner sep=0.75pt]   [align=left] {$6$};
\draw (493,122) node [anchor=north west][inner sep=0.75pt]   [align=left] {$7$};

\end{tikzpicture}

Type $E_8$

\tikzset{every picture/.style={line width=0.75pt}} 

\begin{tikzpicture}[x=0.75pt,y=0.75pt,yscale=-.6,xscale=.6]

\draw   (138.8,109.6) .. controls (138.8,106.51) and (141.31,104) .. (144.4,104) .. controls (147.49,104) and (150,106.51) .. (150,109.6) .. controls (150,112.69) and (147.49,115.2) .. (144.4,115.2) .. controls (141.31,115.2) and (138.8,112.69) .. (138.8,109.6) -- cycle ;
\draw    (150,109.6) -- (210.3,109.6) ;
\draw   (209.8,109.6) .. controls (209.8,106.51) and (212.31,104) .. (215.4,104) .. controls (218.49,104) and (221,106.51) .. (221,109.6) .. controls (221,112.69) and (218.49,115.2) .. (215.4,115.2) .. controls (212.31,115.2) and (209.8,112.69) .. (209.8,109.6) -- cycle ;
\draw    (221,109.6) -- (281.3,109.6) ;
\draw   (280.8,109.6) .. controls (280.8,106.51) and (283.31,104) .. (286.4,104) .. controls (289.49,104) and (292,106.51) .. (292,109.6) .. controls (292,112.69) and (289.49,115.2) .. (286.4,115.2) .. controls (283.31,115.2) and (280.8,112.69) .. (280.8,109.6) -- cycle ;
\draw    (292,109.6) -- (352.3,109.6) ;
\draw    (285.3,54.2) -- (286.4,104) ;
\draw   (422.3,109.6) .. controls (422.3,106.51) and (424.81,104) .. (427.9,104) .. controls (430.99,104) and (433.5,106.51) .. (433.5,109.6) .. controls (433.5,112.69) and (430.99,115.2) .. (427.9,115.2) .. controls (424.81,115.2) and (422.3,112.69) .. (422.3,109.6) -- cycle ;
\draw   (351.8,109.6) .. controls (351.8,106.51) and (354.31,104) .. (357.4,104) .. controls (360.49,104) and (363,106.51) .. (363,109.6) .. controls (363,112.69) and (360.49,115.2) .. (357.4,115.2) .. controls (354.31,115.2) and (351.8,112.69) .. (351.8,109.6) -- cycle ;
\draw    (362,109.6) -- (422.3,109.6) ;
\draw   (279.3,49.6) .. controls (279.3,46.51) and (281.81,44) .. (284.9,44) .. controls (287.99,44) and (290.5,46.51) .. (290.5,49.6) .. controls (290.5,52.69) and (287.99,55.2) .. (284.9,55.2) .. controls (281.81,55.2) and (279.3,52.69) .. (279.3,49.6) -- cycle ;
\draw   (493.3,108.6) .. controls (493.3,105.51) and (495.81,103) .. (498.9,103) .. controls (501.99,103) and (504.5,105.51) .. (504.5,108.6) .. controls (504.5,111.69) and (501.99,114.2) .. (498.9,114.2) .. controls (495.81,114.2) and (493.3,111.69) .. (493.3,108.6) -- cycle ;
\draw    (433,108.6) -- (493.3,108.6) ;
\draw   (563.3,108.6) .. controls (563.3,105.51) and (565.81,103) .. (568.9,103) .. controls (571.99,103) and (574.5,105.51) .. (574.5,108.6) .. controls (574.5,111.69) and (571.99,114.2) .. (568.9,114.2) .. controls (565.81,114.2) and (563.3,111.69) .. (563.3,108.6) -- cycle ;
\draw    (503,108.6) -- (563.3,108.6) ;

\draw (138,122) node [anchor=north west][inner sep=0.75pt]   [align=left] {$1$};
\draw (295,42) node [anchor=north west][inner sep=0.75pt]   [align=left] {$2$};
\draw (209,122) node [anchor=north west][inner sep=0.75pt]   [align=left] {$3$};
\draw (282,122) node [anchor=north west][inner sep=0.75pt]   [align=left] {$4$};
\draw (351.4,122) node [anchor=north west][inner sep=0.75pt]   [align=left] {$5$};
\draw (422,122) node [anchor=north west][inner sep=0.75pt]   [align=left] {$6$};
\draw (493,122) node [anchor=north west][inner sep=0.75pt]   [align=left] {$7$};
\draw (564,122) node [anchor=north west][inner sep=0.75pt]   [align=left] {$8$};

\end{tikzpicture}

Now let  $W_0$ be of type $E_n, n=6,7,8$ and let the numbering of the simple roots of the standard positive system come from  the above Dynkin diagram.
Again since any extended Dyck word starts with $00$, we set $\Lambda^+=\Phi^+_{W_0}$, the standard positive system, $\Delta_{0,1}=\Delta$, $\Delta_{0,2}=\emptyset$, $\Delta_{1,1}=\Delta\backslash \{\alpha_n\}$ and $\Delta_{1,2}=\emptyset$.

Assume that $\Delta_{i,1}=\{\alpha_{1},\alpha_{2},\dots,\alpha_j\}$ and $\Delta_{i,2}=\{\alpha_{n},\alpha_{n-1},\dots,\alpha_l\}$ with $j\geq 4$ and $l\geq j+2.$ If the $(i+1)$-th number in this extended Dyck word is $0$, we set $\Delta_{i+1,1}=\{\alpha_{1},\alpha_{2},\dots,\alpha_{j-1}\}$ and $\Delta_{i+1,2}=\{\alpha_{n},\alpha_{n-1},\dots,\alpha_l\}$. If $l=j+2$, in the initial segment of length $i$ of this extended Dyck word, there are $n-j$ $0$s and $n-l+1=n-j-1$ $1$s. Hence the $(i+1)$-th number in this extended Dyck word cannot be $1$.
So assume that  $l>j+2$  and that the $(i+1)$-th number in this extended Dyck word is $1$.
Then we set $\Delta_{i+1,1}=\{\alpha_{1},\alpha_{2},\dots,\alpha_j\}$ and $\Delta_{i+1,2}=\{\alpha_{n},\alpha_{n-1},\dots,\alpha_{l-1}\}$.

Assume that $\Delta_{i,1}=\{\alpha_{1},\alpha_{2},\alpha_3\}$ and $\Delta_{i,2}=\{\alpha_{n},\alpha_{n-1},\dots,\alpha_l\}$ with $l\geq 6.$ Suppose that the
$(i+1)$-th number in this extended Dyck word is $0$. Set $\Delta_{i+1,1}=\{\alpha_{1},\alpha_3\}$ and $\Delta_{i+1,2}=\{\alpha_{n},\alpha_{n-1},\dots,\alpha_l\}$.
Suppose that the
$(i+1)$-th number in this extended Dyck word is $1$. Then we set $\Delta_{i+1,1}=\{\alpha_{1},\alpha_2,\alpha_3\}$ and $\Delta_{i+1,2}=\{\alpha_{n},\alpha_{n-1},\dots,\alpha_{l-1}\}$.

Assume that $\Delta_{i,1}=\{\alpha_{1},\alpha_{2},\alpha_3\}$ and $\Delta_{i,2}=\{\alpha_{n},\alpha_{n-1},\dots,\alpha_5\}$. Then the
$(i+1)$-th number in this extended Dyck word can only be $0$ since  there are $n-3$ 0s and $n-4$ 1s in the length $i$ initial segment of this extended Dyck word.
Then we set $\Delta_{i+1,1}=\{\alpha_{1},\alpha_3\}$ and $\Delta_{i+1,2}=\{\alpha_{n},\alpha_{n-1},\dots,\alpha_5\}$.

Assume that $\Delta_{i,1}=\{\alpha_{1},\alpha_3\}$ and $\Delta_{i,2}=\{\alpha_{n},\alpha_{n-1},\dots,\alpha_l\}$ with $l\geq 5$. If the
$(i+1)$-th number in this extended Dyck word is $0$, we set $\Delta_{i+1,1}=\{\alpha_{1}\}$ and $\Delta_{i+1,2}=\{\alpha_{n},\alpha_{n-1},\dots,\alpha_l\}$.
Assume instead that the
$(i+1)$-th number in this extended Dyck word is $1$. In case $l>5$, we set $\Delta_{i+1,1}=\{\alpha_{1},\alpha_3\}$ and $\Delta_{i+1,2}=\{\alpha_{n},\alpha_{n-1},\dots,\alpha_{l-1}\}.$
In case $l=5$, we set $\Delta_{i+1,1}=\{\alpha_{1},\alpha_3\}$ and $\Delta_{i+1,2}=\{\alpha_{n},\alpha_{n-1},\dots,\alpha_{5},\alpha_2\}.$

Assume that $\Delta_{i,1}=\{\alpha_{1},\alpha_3\}$ and $\Delta_{i,2}=\{\alpha_{n},\alpha_{n-1},\dots,\alpha_{5},\alpha_2\}$. In this case the
$(i+1)$-th number in this extended Dyck word can only be $0$ since  there are $n-2$ 0s and $n-3$ 1s in the length $i$ initial segment of this extended Dyck word.
Then we set $\Delta_{i+1,1}=\{\alpha_{1}\}$ and $\Delta_{i+1,2}=\{\alpha_{n},\alpha_{n-1},\dots,\alpha_{5},\alpha_2\}.$

Assume that $\Delta_{i,1}=\{\alpha_{1}\}$ and $\Delta_{i,2}=\{\alpha_{n},\alpha_{n-1},\dots,\alpha_l\}$ with $l\geq 4$. If the
$(i+1)$-th number in this extended Dyck word is $0$, we set $\Delta_{i+1,1}=\emptyset$ and $\Delta_{i+1,2}=\{\alpha_{n},\alpha_{n-1},\dots,\alpha_l\}$.
Suppose otherwise that the
$(i+1)$-th number in this extended Dyck word is $1$.  If $l\geq 5,$ we set $\Delta_{i+1,1}=\{\alpha_{1}\}$ and $\Delta_{i+1,2}=\{\alpha_{n},\alpha_{n-1},\dots,\alpha_{l-1}\}$.
If $l=4$, we set $\Delta_{i+1,1}=\{\alpha_{1}\}$ and $\Delta_{i+1,2}=\{\alpha_{n},\alpha_{n-1},\dots,\alpha_{4},\alpha_2\}$.

Assume that $\Delta_{i,1}=\{\alpha_{1}\}$ and $\Delta_{i,2}=\{\alpha_{n},\alpha_{n-1},\dots,\alpha_{4},\alpha_2\}$. Then the $(i+1)$-th number in this extended Dyck word can only be $0$ since there are $n-1$ 0s and $n-2$ 1s in the length $i$ initial segment of this extended Dyck word. Then we set $\Delta_{i+1,1}=\emptyset$ and $\Delta_{i+1,2}=\{\alpha_{n},\alpha_{n-1},\dots,\alpha_{4},\alpha_2\}$.

Assume that $\Delta_{i,1}=\{\alpha_{1}\}$ and $\Delta_{i,2}=\{\alpha_{n},\alpha_{n-1},\dots,\alpha_{5},\alpha_2\}$. If the $(i+1)$-th number in this extended Dyck word is $0$, we set $\Delta_{i+1,1}=\emptyset$ and $\Delta_{i+1,2}=\{\alpha_{n},\alpha_{n-1},\dots,\alpha_{5},\alpha_2\}$. If the $(i+1)$-th number in this extended Dyck word is $1$, we set $\Delta_{i+1,1}=\{\alpha_{1}\}$ and $\Delta_{i+1,2}=\{\alpha_{n},\alpha_{n-1},\dots,\alpha_5, \alpha_{4},\alpha_2\}$.

Finally assume that $\Delta_{i,1}=\emptyset$. Then in the initial segment of  length $i$ of this extended Dyck word there are $n$ 0s. So the $(i+1)$-th number in this word can only be $1$. Then we set $\Delta_{i+1,2}=\Delta_{i,2}\cup \{\alpha_p\}$ where $\alpha_p\in \Delta\backslash \Delta_{i,2}.$ Then the maximal chain of biclosed sets constructed in the above way has the desired signature.
\end{proof}

\begin{theorem}\label{mainthm}
Let $W$ be an irreducible affine Weyl group of rank $n$. A possible order type of the reflection orders on $W$ has the following form.

Take a $2n$-trimmed Dyck word $\epsilon_1\epsilon_2\cdots\epsilon_k$. Construct an order type $\sum_{i=1}^k \lambda_i$ where

$\lambda_i=\omega$ if $\epsilon_i=0$ and $\lambda_i=[n_i]+\omega^*$ if $\epsilon_i=1$ for some nonnegative $n_i$.

Furthermore 

(a) $n_i$ can only be 0 if for all $j\leq i-1$, $\epsilon_j=0$ and for all $k\geq i$, $\epsilon_k=1$.

(b) $n_i$ can be any arbitrary nonnegative integer in other cases.
\end{theorem}

\begin{proof}
By Lemma \ref{basiccorrespond} and Lemma \ref{wordsandchain}, any possible order type of the reflection orders on $W$  takes the form $\sum_{i=1}^k \lambda_i$ such that
$\lambda_i=\omega$ if $\epsilon_i=0$ and
$\lambda_i=[n_i]+\omega^*, n_i\geq 0$ if $\epsilon_1=1.$
We need to prove that the additional (stronger) statements (a) and (b) hold.

 For (a), we show that if for all $j\leq i-1$, $\epsilon_j=0$ and for all $k\geq i$, $\epsilon_k=1$, $\lambda_i$ cannot be
$[p]+\omega^*$ for $p>0.$ Suppose that $\prec$ is a reflection order such that the chain $J(\prec)_0$ has such a signature.
The conditions force that the $i$-th element in $J(\prec)\backslash \{\emptyset\}$ is of the form $J(\alpha)$ and $J(\alpha)_0=\Lambda^+_{\emptyset,\emptyset}=\Lambda^+.$ Therefore $J(\alpha)$ can only be $(\Lambda^+_{\emptyset,\emptyset})^E=(\Lambda^+)^E.$ We conclude that
for all $\gamma\in \Lambda^+$, $\gamma^E\subset J(\alpha).$ Therefore $\Phi_{W_0}^E\backslash J(\alpha)=\cup_{\gamma\in \Lambda^+}(-\lambda)^E$. The restriction of $\prec$ on each $(-\lambda)^E$ is of type $\omega^*$. Therefore there cannot be any finite initial interval for $(\Phi_{W_0}^E\backslash J(\alpha),\prec\mid_{\Phi_{W_0}^E\backslash J(\alpha)})$.

For (b), first we assume that $\epsilon_{i-1}=\epsilon_i=1$. 
One notes that $\omega^*+[n_i]+\omega^*=\omega^*+\omega^*.$ Therefore $n_i$ can be arbitrary natural number.

Next we assume that $\epsilon_{i-1}=0, \epsilon_i=1$ and there exists some $j<i-1$ (resp. $t>i$) such that $\epsilon_j=1$ (resp. $\epsilon_t=0$)
We show that $\lambda_i$ can indeed be $[n_i]+\omega^*$ with $n_i>0$ being prescribed.
For convenience, we call $i$ an insertable index if $\epsilon_{i-1}=0, \epsilon_i=1$ and there exists some $j<i-1$ (resp. $t>i$) such that $\epsilon_j=1$ (resp. $\epsilon_t=0$).
We first consider the collection $\{(\Phi_{W_0}^+)_{\Delta_{i,1},\Delta_{i,2}}\}_{i=0}^k$ which is an admissible chain of biclosed sets in $\Phi_{W_0}$ whose signature is the desired $2n$-trimmed Dyck word.
Then one gets an associated chain of biclosed sets $\{((\Phi_{W_0}^+)_{\Delta_{i,1},\Delta_{i,2}})^E\}_{i=0}^k$ in $\Phi_W^+.$
One can extend this chain of biclosed sets in $\Phi_W^+$ to a maximal chain of biclosed sets in $\Phi^+_W$, which in turn gives rise to a reflection order $\prec.$ In particular, one has
$\{J(\prec)_0\}=\{(\Phi_{W_0}^+)_{\Delta_{i,1},\Delta_{i,2}}\}_{i=0}^k.$ Now suppose that $p$ is the smallest insertable index. Then one has $\epsilon_{p-1}=0$ and $\epsilon_p=1.$
Let $q$ be the smallest index such that $q>p$ and $\epsilon_q=0.$ (Such $q$ exists since the condition in (a) is not satisfied.) Denote by $B_i$ the $i$-th element in the chain $\{((\Phi_{W_0}^+)_{\Delta_{i,1},\Delta_{i,2}})^E\}_{i=0}^k$. (i.e. $B_i=((\Phi_{W_0}^+)_{\Delta_{i,1},\Delta_{i,2}})^E$.) When restricted on $B_{p-1}\backslash B_{p-2}$, the total order $\prec$ takes the form
$$\beta_1\prec \beta_2\prec \beta_3 \prec \cdots. $$  When restricted on $B_{j}\backslash B_{j-1}, p\leq j\leq q-1$, the total order $\prec$ takes the form
$$\cdots \prec\gamma_{j,3}\prec \gamma_{j,2}\prec \gamma_{j,1}.$$ When restricted on $B_{q}\backslash B_{q-1}$, the total order $\prec$ takes the form
$$\mu_1\prec \mu_2\prec \mu_3 \prec \cdots. $$
By our construction the set of roots $\{\mu_1, \cdots, \mu_{n_p}\}$ is contained in
 $$(\Phi_{\Delta_{q-1,1}}^+\backslash \Phi_{\Delta_{q,1}}^+)^E$$
 and the set of roots $\{\gamma_{j,z},p\leq j\leq q-1, 1\leq z\}$ is contained in $$(\Phi_{\Delta_{q-1,2}}^{-}\backslash \Phi_{\Delta_{p-1,2}}^-)^E.$$

One can construct a new reflection order $\prec_1$ by  moving the interval $$\mu_1\prec \mu_2\prec \mu_3 \prec \cdots \prec \mu_{n_p}$$
in-between $\beta_1\prec \beta_2\prec \beta_3 \prec \cdots$ and $\cdots \prec\gamma_{p,3}\prec \gamma_{p,2}\prec \gamma_{p,1}$.
Since $\Delta_{q-1,1}$ and $\Delta_{q-1,2}$ are orthogonal, one readily sees that the initial intervals of $\prec_1$ are all biclosed in $\Phi_{W}^+$ and therefore $\prec_1$ is also a reflection order. Furthermore $\prec_1$ and $\prec$ have the same associated signature (and in fact $J(\prec_1)_0$ and $J(\prec)_0$ are the same)  and $\lambda_p$  is indeed $[n_p]+\omega^*.$ This process can be visualized in the following picture.

\tikzset{every picture/.style={line width=0.75pt}} 
\begin{tikzpicture}[x=0.8pt,y=0.75pt,yscale=-0.8,xscale=0.8]
\draw (0,0) node [anchor=north west][inner sep=0.75pt]   [align=left] {$\cdots\fbox{$\underbrace{ \bullet \prec \bullet \prec \cdots}_{\text{type}\,\omega}$}\,\fbox{$\underbrace{\cdots \prec\bullet \prec \bullet }_{\text{type}\,\omega^*}$} \,\fbox{$\underbrace{\cdots\cdots}_{\text{type}\,\omega^*+\cdots+\omega^*}$} \, \fbox{$\underbrace{\cdots \prec\bullet \prec \bullet}_{\text{type}\,\omega^*}$}\, \prec \fbox{$\underbrace{\bullet\prec\cdots \prec \bullet}_{\text{finitely many}}$}\prec\bullet\cdots$};
\draw    (450,49) --  (450,69) ;
\draw    (450,69) --  (110,69) ;
\draw    (110,69) --  (110,49) ;
\draw    (110,49) -- (114,54) ;
\draw     (110, 49) -- (106, 54);
\end{tikzpicture}

Let $I=\{1\leq i\leq k-1|\,i\,\text{is}\,\text{insertable}\}$ and let $t$ be the maximum element in $I$.
We can repeat the above operation to create a reflection order $\prec_r$ such that $\lambda_i$  is $[n_i]+\omega^*$ for all $i\in I\backslash\{t\}$ and $\prec_r$ and $\prec$ have the same associated signature. 

Finally we treat the last insertable  position. Now let $B_i$ be the $i-$th element of $J(\prec_r)$.
Assume that $u$ be the greatest index such that $u<t$ and $\epsilon_u=1$.
Restricted on $B_{u}\backslash B_{u-1}$,  $\prec_r$ has the form
$$\cdots \prec\kappa_{3}\prec \kappa_{2}\prec \kappa_{1}.$$
Restricted on $B_{i}\backslash B_{i-1}, u+1\leq i\leq t-1$,  $\prec_r$ has the form
$$\eta_{i,1}\prec \eta_{i,2}\prec \eta_{i,3} \prec \cdots.$$  
$B_t\backslash B_{t-1}$, $\prec_r$ has the form
$$\cdots \prec\upsilon_{3}\prec \upsilon_{2}\prec \upsilon_{1}.$$ 

By our construction the set of roots $\{\kappa_1, \cdots, \kappa_{n_t}\}$ is contained in
$$(\Phi_{\Delta_{u,2}}^-\backslash \Phi_{\Delta_{u-1,2}}^-)^E$$
and the set of roots $\{\eta_{i,1}\prec \eta_{i,2}\prec \eta_{i,3} \prec \cdots\}$ is contained in
$$(\Phi_{\Delta_{u,1}}^+\backslash \Phi_{\Delta_{t,1}}^+)^E.$$
Therefore the set of roots $\{\kappa_1, \cdots, \kappa_{n_t}\}$ and the set of roots $\{\eta_{i,1}\prec \eta_{i,2}\prec \eta_{i,3} \prec \cdots\}$ are orthogonal. So we can construct a new reflection order $\prec_{r+1}$ by  moving the interval $$\kappa_{n_t}\prec\cdots \prec\kappa_{3}\prec \kappa_{2}\prec \kappa_{1}$$
in-between $\eta_{t-1,1}\prec \eta_{t-1,2}\prec \eta_{t-1,3} \prec \cdots$ and $\cdots \prec\upsilon_{3}\prec \upsilon_{2}\prec \upsilon_{1}$.
Then $\prec_{r+1}$ is a reflection with the desired order type. This last step can be visualized in the following picture.

\tikzset{every picture/.style={line width=0.75pt}} 
\begin{tikzpicture}[x=0.8pt,y=0.75pt,yscale=-0.8,xscale=0.8]
\draw (0,0) node [anchor=north west][inner sep=0.75pt]   [align=left] {$\cdots\prec\bullet\prec \fbox{$\underbrace{\bullet \prec\cdots  \prec \bullet\prec}_{\text{finitely}\,\text{many}}$}\,\fbox{$\underbrace{ \bullet \prec \bullet \prec \cdots}_{\text{type}\,\omega}$}\,\fbox{$\underbrace{\cdots\cdots}_{\text{type}\,\omega+\cdots+\omega}$} \,\fbox{$\underbrace{ \bullet \prec \bullet \prec \cdots}_{\text{type}\,\omega}$}\, \fbox{$\underbrace{\cdots \prec\bullet \prec \bullet }_{\text{type}\,\omega^*}$}  \cdots$};
\draw    (120,49) --  (120,69) ;
\draw    (120,69) --  (438,69) ;
\draw    (438,69) --  (438,49) ;
\draw    (438,49)  --  (442,54);
\draw    (438,49)  --  (434,54);
\end{tikzpicture}

\end{proof}

\subsection{Example}\label{keyexample} The order types of the reflection orders on affine Weyl group of rank 3 (e.g.  of type $\widetilde{A}_3$) are as follows:

\begin{tabular}{|c|c|}
  \hline
  6-trimmed Dyck word & Order type(s)\\
  \hline
  $0111$ & $\omega+\omega^*+\omega^*+\omega^*$ \\
  $00111$ & $\omega+\omega+\omega^*+\omega^*+\omega^*$ \\
  $000111$ & $\omega+\omega+\omega+\omega^*+\omega^*+\omega^*$ \\
  $011$  &  $\omega+\omega^*+\omega^*$ \\
  $0011$  &  $\omega+\omega+\omega^*+\omega^*$ \\
  $00011$  &  $\omega+\omega+\omega+\omega^*+\omega^*$ \\
  $01$  &   $\omega+\omega^*$   \\
  $001$  &   $\omega+\omega+\omega^*$   \\
  $0001$  &   $\omega+\omega+\omega+\omega^*$   \\
  $001011$  &  $\omega+\omega+[a]+\omega^*+\omega+[b]+\omega^*+\omega^*$ for some $a,b\geq 0$ \\
  $01011$  &  $\omega+[a]+\omega^*+\omega+[b]+\omega^*+\omega^*$ for some $a,b\geq 0$ \\
  $00101$  &  $\omega+\omega+[a]+\omega^*+\omega+[b]+\omega^*$ for some $a,b\geq 0$ \\
  $0101$  &  $\omega+[a]+\omega^*+\omega+[b]+\omega^*$ for some $a,b\geq 0$ \\
  \hline
\end{tabular}

\section*{appendix: Enumeration of 2n-trimmed Dyck words}\label{appendix}

In this appendix we shall show that  the number of $2n$-trimmed Dyck words is $C_{n+1}-1$. The key idea of the proof is that in order to obtain any $2n$-trimmed Dyck word, it is sufficient to simply shorten the initial segment (consisting of 0s)  and the final segment (consisting of 1s) of all extended Dyck words of length $2n$.

A sequence of $0$s and $1$s will be called a \emph{generalized} $(a, b, c)-$\emph{Dyck sequence} if

(a) it consists of $a$ $0$s and $b$ $1$s  and 

(b) in any initial subsequence of it the number of $1$s minus the number of $0$s cannot  be  greater than or equal to $c$.

A standard Dyck word of length $2n$ is a generalized $(n, n, 1)-$Dyck sequence.

The following result is analogous to the one regarding the enumeration of standard Dyck words of length $2n$.

\begin{theorem}\label{counting}
The number of generalized $(a, b, c)-$Dyck words is given by $$\binom{a+b}{a}-\binom{a+b}{a+c}.$$
\end{theorem} 

\begin{proof}
To prove this assertion, we adopt the similar strategy which proves the fact that Catalan number $\frac{1}{n+1}\binom{2n}{n}$ counts certain balanced paths.
Suppose $s$ is a sequence consisting $a$ $0$s and $b$ $1$s.
From $s$  we can construct a path (consisting $a+b$ steps) which starts at $(0,0)$ and ends at $(a+b,a-b)$.
The $i-$th step of this path starts at the endpoint of the $(i-1)-$th step $(m,n)$. It ends at the point $(m+1, n+1)$ if the $i-$th letter in $s$ is $0$.
It ends at the point $(m+1, n-1)$ if the $i-$th letter in $s$ is $1$. 
Clearly there are $\binom{a+b}{a}$ many such paths. Among these paths, some correspond to generalized $(a, b, c)-$Dyck sequences and some do not.
For those which do not, they must touch the horizontal line $y=-c$ at some point.
One can flip the part of the path after that point about this line $y=-c$. Then those paths which do not correspond to  generalized $(a, b, c)-$Dyck sequences end at $(a+b, b-a-2c).$ Furthermore after reflecting, there are $b-c$  $0$s and $a+c$  $1$s in the path. Therefore there are
$\binom{a+b}{a+c}$  such paths not corresponding to generalized $(a, b, c)-$Dyck sequences. The assertion follows.

\begin{center}
\tikzset{every picture/.style={line width=0.75pt}} 

\begin{tikzpicture}[x=0.75pt,y=0.75pt,yscale=-1,xscale=1]

\draw  [draw opacity=0][dash pattern={on 0.84pt off 2.51pt}] (200.73,1227.16) -- (381.73,1227.16) -- (381.73,1328.16) -- (200.73,1328.16) -- cycle ; \draw  [dash pattern={on 0.84pt off 2.51pt}] (200.73,1227.16) -- (200.73,1328.16)(220.73,1227.16) -- (220.73,1328.16)(240.73,1227.16) -- (240.73,1328.16)(260.73,1227.16) -- (260.73,1328.16)(280.73,1227.16) -- (280.73,1328.16)(300.73,1227.16) -- (300.73,1328.16)(320.73,1227.16) -- (320.73,1328.16)(340.73,1227.16) -- (340.73,1328.16)(360.73,1227.16) -- (360.73,1328.16)(380.73,1227.16) -- (380.73,1328.16) ; \draw  [dash pattern={on 0.84pt off 2.51pt}] (200.73,1227.16) -- (381.73,1227.16)(200.73,1247.16) -- (381.73,1247.16)(200.73,1267.16) -- (381.73,1267.16)(200.73,1287.16) -- (381.73,1287.16)(200.73,1307.16) -- (381.73,1307.16)(200.73,1327.16) -- (381.73,1327.16) ; \draw  [dash pattern={on 0.84pt off 2.51pt}]  ;
\draw    (240.73,1247.48) -- (200.73,1287.48) ;
\draw    (300.73,1307.48) -- (240.73,1247.48) ;
\draw    (320.73,1287.48) -- (300.73,1307.48) ;
\draw    (340.73,1307.48) -- (320.73,1287.48) ;
\draw    (380.73,1267.48) -- (340.73,1308.16) ;
\draw    (200.61,1328.27) -- (200.73,1197.48) ;
\draw [shift={(200.73,1194.48)}, rotate = 90.05] [fill={rgb, 255:red, 0; green, 0; blue, 0 }  ][line width=0.08]  [draw opacity=0] (8.93,-4.29) -- (0,0) -- (8.93,4.29) -- cycle    ;
\draw    (200.39,1287.07) -- (397.58,1287.16) ;
\draw [shift={(400.58,1287.16)}, rotate = 180.03] [fill={rgb, 255:red, 0; green, 0; blue, 0 }  ][line width=0.08]  [draw opacity=0] (8.93,-4.29) -- (0,0) -- (8.93,4.29) -- cycle    ;

\draw (387,1291.29) node [anchor=north west][inner sep=0.75pt]    {$x$};
\draw (208,1188.29) node [anchor=north west][inner sep=0.75pt]    {$y$};
\draw (361.73,1250.56) node [anchor=north west][inner sep=0.75pt]    {$( 9,1)$};

\end{tikzpicture}
\end{center}
\begin{figure}[h]
A path corresponding to a generalized $(5,4,2)-$Dyck sequence
\end{figure}

\begin{center}
\begin{tikzpicture}[x=0.75pt,y=0.75pt,yscale=-1,xscale=1]
\draw  [draw opacity=0][dash pattern={on 0.84pt off 2.51pt}] (200.48,1477.73) -- (381.73,1477.73) -- (381.73,1638.1) -- (200.48,1638.1) -- cycle ; \draw  [dash pattern={on 0.84pt off 2.51pt}] (200.48,1477.73) -- (200.48,1638.1)(220.48,1477.73) -- (220.48,1638.1)(240.48,1477.73) -- (240.48,1638.1)(260.48,1477.73) -- (260.48,1638.1)(280.48,1477.73) -- (280.48,1638.1)(300.48,1477.73) -- (300.48,1638.1)(320.48,1477.73) -- (320.48,1638.1)(340.48,1477.73) -- (340.48,1638.1)(360.48,1477.73) -- (360.48,1638.1)(380.48,1477.73) -- (380.48,1638.1) ; \draw  [dash pattern={on 0.84pt off 2.51pt}] (200.48,1477.73) -- (381.73,1477.73)(200.48,1497.73) -- (381.73,1497.73)(200.48,1517.73) -- (381.73,1517.73)(200.48,1537.73) -- (381.73,1537.73)(200.48,1557.73) -- (381.73,1557.73)(200.48,1577.73) -- (381.73,1577.73)(200.48,1597.73) -- (381.73,1597.73)(200.48,1617.73) -- (381.73,1617.73)(200.48,1637.73) -- (381.73,1637.73) ; \draw  [dash pattern={on 0.84pt off 2.51pt}]  ;
\draw    (200.48,1657.73) -- (200.73,1451.17) ;
\draw [shift={(200.73,1448.17)}, rotate = 90.07] [fill={rgb, 255:red, 0; green, 0; blue, 0 }  ][line width=0.08]  [draw opacity=0] (8.93,-4.29) -- (0,0) -- (8.93,4.29) -- cycle    ;
\draw    (200.39,1537.68) -- (397.58,1537.77) ;
\draw [shift={(400.58,1537.77)}, rotate = 180.03] [fill={rgb, 255:red, 0; green, 0; blue, 0 }  ][line width=0.08]  [draw opacity=0] (8.93,-4.29) -- (0,0) -- (8.93,4.29) -- cycle    ;
\draw    (220.48,1517.73) -- (200.86,1537.51) ;
\draw    (280.48,1577.73) -- (220.48,1517.73) ;
\draw    (320.48,1537.73) -- (280.48,1577.73) ;
\draw    (340.48,1557.73) -- (320.48,1537.73) ;
\draw    (380.48,1517.73) -- (340.48,1557.73) ;
\draw [color={rgb, 255:red, 0; green, 0; blue, 0 }  ,draw opacity=1 ] [dash pattern={on 4.5pt off 4.5pt}]  (320.48,1617.73) -- (280.48,1577.73) ;
\draw [color={rgb, 255:red, 0; green, 0; blue, 0 }  ,draw opacity=1 ] [dash pattern={on 4.5pt off 4.5pt}]  (340.48,1597.73) -- (320.48,1617.73) ;
\draw [color={rgb, 255:red, 0; green, 0; blue, 0 }  ,draw opacity=1 ] [dash pattern={on 4.5pt off 4.5pt}]  (380.48,1637.73) -- (340.48,1597.73) ;



\draw (208,1442.29) node [anchor=north west][inner sep=0.75pt]    {$y$};
\draw (387,1540.29) node [anchor=north west][inner sep=0.75pt]    {$x$};
\draw (361.73,1499.56) node [anchor=north west][inner sep=0.75pt]    {$( 9,1)$};
\draw (351.48,1640.13) node [anchor=north west][inner sep=0.75pt]    {$( 9,-5)$};
\end{tikzpicture}
\end{center}
\begin{figure}[h]
A path corresponding to a sequence with $5$ $0$s and $4$ $1$s  which is not a  generalized $(5,4,2)-$Dyck sequence
\end{figure}
\end{proof}

We shall denote the number $\binom{a+b}{a}-\binom{a+b}{a+c}$ by $\mathrm{GC}(a,b,c)$ and denote by $C_n$ the $n-$th Catalan number.

\begin{lemma}\label{combiidentity}
Let $n>1$ be a positive integer. The following identity holds.
$$1+n^2+\sum_{i=2}^{n-1}\sum_{j=2}^{n-1} ij\mathrm{GC}(n-i-1,n-j-1,i-1)=\frac{1}{n+2}\binom{2n+2}{n+1}=C_{n+1}.$$
\end{lemma}

\begin{proof}
The assertion follows from two elementary combinatorial identities below.

\begin{equation}\label{eq1}
\sum_{j=2}^{n-1}j\left[\binom{2n-i-j-2}{n-j-1}-\binom{2n-i-j-2}{n-2}\right]=\frac{i-1}{n}\binom{2n-i}{n-i+1};
\end{equation}

\begin{equation}\label{eq2}
\sum_{i=2}^{n+1}\frac{i(i-1)}{n}\binom{2n-i}{n-i+1}=C_{n+1}
\end{equation}

The equation \ref{eq1} can be verified using double induction. It is straightforward to verify the identity for the cases where $n=2$ or $i=1$.
Suppose that the identity holds for $n=m-1$ and $i=k-1$ and also holds for $n=m$ and $i=k.$
 Note that
$$\sum_{j=2}^{m-1}j\left[\binom{2m-(k+1)-j-2}{m-j-1}-\binom{2m-(k+1)-j-2}{m-2}\right]$$
$$=\left\{\sum_{j=2}^{m-1}j\left[\binom{2m-k-j-2}{m-j-1}-\binom{2m-k-j-2}{m-2}\right]\right\}$$$$-\left\{\sum_{j=2}^{m-2}j\left[\binom{2(m-1)-(k-1)-j-2}{(m-1)-j-1}-\binom{2(m-1)-(k-1)-j-2}{(m-1)-2}\right]\right\}$$
By induction 
$$=\frac{k-1}{m}\binom{2m-k}{m-k+1}-\frac{k-2}{m-1}\binom{2m-k-1}{m-k+1}$$
$$=\frac{(2m-k)(k-1)\frac{1}{m}}{m-k+1}\binom{2m-k-1}{m-k}-\frac{k-2}{m-k+1}\binom{2m-k-1}{m-k}$$
$$=\frac{k}{m}\binom{2m-k-1}{m-k}$$

The equation \ref{eq2} can be verified using generating function and convolution products.

We start with the following expansion:
$$\frac{(k-1)!x}{(1-x)^{k}}=\sum_{j=1}^{\infty}(j(j+1)\cdots (k-2+j))x^j.$$
Then $$\frac{2x}{n(n+1)(n+2)}\frac{(n+2)!x}{(1-x)^{n+3}}=\frac{2(n-1)!x^2}{(1-x)^{n+3}}=\frac{2x}{(1-x)^3}\frac{(n-1)!x}{(1-x)^{n}}.$$
The expansion of the left hand side shows that the $(n+1)-$th term is
$$\frac{2x}{n(n+1)(n+2)}n(n+1)(n+2)\cdots (2n+1)x^n=2(n+3)(n+4)\cdots (2n+1)x^{n+1}.$$ 
The expansion of the right hand side shows the $(n+1)-$th term is
$$\sum_{j=1}^{n}j(j+1)(2n-j-1)(2n-j-2)\cdots (n-j+1)x^{n+1}.$$ 
Therefore we have the equality
$$\sum_{j=1}^{n}j(j+1)(2n-j-1)(2n-j-2)\cdots (n-j+1)=2(n+3)(n+4)\cdots (2n+1)$$
$$=2\frac{(2n+1)!}{(n+2)!}=\frac{2(n+1)(2n+1)!}{(n+1)(n+2)!}=\frac{(2n+2)!}{(n+2)!(n+1)}$$$$=\frac{n!(2n+2)!}{(n+1)!n!(n+1)(n+2)}=\frac{n!}{n+2}\binom{2n+2}{n+1}=n!C_{n+1}$$
Dividing both sides by $n!$ yields
$$\sum_{j=1}^{n}\frac{j(j+1)(2n-j-1)(2n-j-2)\cdots (n-j+1)}{n!}=C_{n+1}$$
$$\sum_{j=1}^{n}\frac{j(j+1)}{n}\cdot\frac{(2n-j-1)(2n-j-2)\cdots (n-j+1)}{(n-1)!}=C_{n+1}$$
$$\sum_{j=1}^{n}\frac{j(j+1)}{n}\cdot\frac{(2n-j-1)!}{(n-j)!(n-1)!}=C_{n+1}$$
$$\sum_{j=1}^{n}\frac{j(j+1)}{n}\binom{2n-j-1}{n-j}=C_{n+1}$$
$$\sum_{j=2}^{n+1}\frac{j(j-1)}{n}\binom{2n-j}{n-j+1}=C_{n+1}$$
$$1+n^2+\sum_{j=2}^{n-1}\frac{j(j-1)}{n}\binom{2n-j}{n-j+1}=C_{n+1}.$$
 
By \ref{eq2}, we have 
$$1+n^2+\sum_{i=2}^{n-1}\frac{i(i-1)}{n}\binom{2n-i}{n-i+1}=C_{n+1}.$$
Then plug in \ref{eq1}, we have the desired formula.
\end{proof}

An extended Dyck word of length $2n$ is said to have type $(a,b)$ if it is of the form $$\underbrace{00\dots0}_\textrm{a times}1\dots0\underbrace{11\dots 1}_\textrm{b times}.$$

\begin{lemma}\label{numbering}
The number of type $(a,b)$ extended Dyck words of length $2n$ is $\mathrm{GC}(n-a-1, n-b-1, a-1)$.
\end{lemma}

\begin{proof}
Such a word starts with $a$ $0$'s and one $1$ and ends with one $0$ and $b$ $1$'s. In-between these two segments, any
generalized  $(n-a-1, n-b-1, a-1)-$Dyck sequence can be inserted to create a unique type $(a,b)$ extended Dyck word of length $2n$ and any type $(a,b)$ extended Dyck word of length $2n$ can be obtained this way. Then the assertion follows from the above argument and Theorem \ref{counting}.
\end{proof}

A $2n$-trimmed Dyck word $w$ is said to be associated with an extended Dyck word (of length $2n$) $u=\underbrace{00\dots0}_\textrm{a times}1\dots0\underbrace{11\dots 1}_\textrm{b times}$ if it is obtained by substituting the first $a$ $0$s with $a'$ ($a'\leq a$) $0$s and substituting the final $b$ $0$'s with $b'$ ($b'\leq b$) 1's. 

\begin{lemma}\label{association}
Every $2n-$trimmed Dyck word is associated to a unique extended Dyck word of length $2n$.
\end{lemma}

\begin{proof}
Let $w$ be a $2n-$trimmed Dyck word. Therefore $w$ has the form 

(a) $0\dots 01w' 01\dots 1$ or 

(b) $0\dots 01\dots 1$.

In the situation (b) let $w''$ be the empty word and in the situation (a) let $w''$ be $1w'0$. Suppose that $w''$ consists $a$ 0's and $b$ 1s.
We construct a word $$\underbrace{0\dots 0}_\textrm{$n-a$ times}w''\underbrace{1\dots 1}_\textrm{$n-b$ times}.$$
Such a word is clearly an extended Dyck word of length $2n$.
\end{proof}  
  
\begin{theorem}
The number of  $2n-$trimmed Dyck words is $C_{n+1}-1$.
\end{theorem}

\begin{proof}
By Lemma \ref{association}, each $2n-$trimmed Dyck word is associated to a unique extended Dyck word of length $2n$.
Suppose that such an extended Dyck word is of type $(a,b)$. Then evidently the number of $2n-$trimmed Dyck words associated with it is $ab$.
By Lemma \ref{numbering}, there are $\mathrm{GC}(n-a-1, n-b-1, a-1)$ type $(a,b)$ extended Dyck words of length $2n$.
Therefore all type $(a,b)$ extended Dyck words of length $2n$ give rise to $ab\mathrm{GC}(n-a-1, n-b-1, a-1)$ different $2n-$trimmed Dyck words.
Finally the theorem follows from Lemma \ref{combiidentity}.
\end{proof}

\section{Acknowledgment}

The author thanks Professor Matthew Dyer for useful communication which inspires the condensation in Section \ref{condensation}.

\section{Funding Declaration}

None.





\end{document}